\newtheorem{thm}{Theorem}
\newtheorem{lem}{Lemma}
\newdefinition{rmk}{Remark}
\newdefinition{definition}{Definition}
\newdefinition{obs}{Observation}
\begin{document}

\title{Quasi-efficient domination in grids}

\author[math]{S.A. Aleid\fnref{fn1}}
\ead{sahar.aleid@ual.es}
\author[math]{J. C\'aceres\fnref{fn2}}
\ead{jcaceres@ual.es}
\author[math]{M.L. Puertas\corref{cor1}\fnref{fn2}}
\ead{mpuertas@ual.es}

\cortext[cor1]{Corresponding author}

\fntext[fn1]{Supported by Phoenix Project (Erasmus Mundus Programme).}
\fntext[fn2]{Partially supported by Junta de Andaluc\'ia FQM305 and MINECO MTM2014-60127-P.}

\address[math]{Department of Mathematics, University of Almer\'ia.}

\begin{abstract}
Domination of grids has been proved to be a demanding task and with the addition of independence it becomes more challenging. It is known that no grid with $m,n \geq 5$ has an efficient dominating set, also called perfect code, that is, an independent vertex set such that each vertex not in it has exactly one neighbor in that set. So it is interesting to study the existence of independent dominating sets for grids that allow at most two neighbors, such sets are called independent $[1,2]$-sets. In this paper we prove that every grid has an independent $[1,2]$-set, and we develop a dynamic programming algorithm using min-plus algebra that computes $\imath\dot{}_{[1,2]}(P_m\Box P_n)$, the minimum cardinality of an independent $[1,2]$-set for the grid graph $P_m\square P_n$. We calculate $\imath\dot{}_{[1,2]}(P_m\Box P_n)$ for $2\leq m\leq 13, n\geq m$ using this algorithm, meanwhile the parameter for grids with $14\leq m\leq n$ is obtained through a quasi-regular pattern that, in addition, provides an independent $[1,2]$-set of minimum size.

\end{abstract}

\begin{keyword}
Domination, independence, grid graphs, min-plus algebra.\\
\MSC[2010] 05C69
\end{keyword}

\maketitle

\section{Introduction}

Perfect codes have played a central role in the development of error-correcting codes theory. A \emph{code} in a graph is a vertex set such that any two vertices in it are at distance at least $3$. If, in addition, every vertex not in the code has a neighbor in it, the code is called \emph{perfect} \cite{Big}. A set $S$ of vertices in a graph $G$ is called \emph{independent} if no two vertices in $S$ are adjacent and it is called \emph{dominating} if every vertex not in $S$ has at least one neighbor in $S$. Therefore a perfect code is an independent dominating set such that every vertex not in the set has a unique neighbor in it. These sets are also called \emph{efficient dominating} sets \cite{Ban}, in the sense that they represent the situation where each vertex on the graph is minimally dominated.

It is well known that all efficient dominating sets in a graph $G$ have the same cardinality  $\gamma(G)$ the \emph{domination number}, that is, the cardinal of a minimum dominating set in $G$. Unfortunately the existence of this type of dominating sets is not guaranteed in every graph and it has been extensively studied. For instance the path $P_m$ has an efficient dominating set with $\gamma(P_m)=\lceil \frac{m}{3}\rceil$ vertices for each $m\geq 1$, however the grid graph, which is the cartesian product $P_m\Box P_n$ of two paths, has no efficient dominating set unless $m=n=4$ or $m=2,n=2k+1$  $(2\leq m\leq n)$ \cite{Mar}. In these cases a less demanding construction could be keeping domination and independence but admitting more neighbors, for vertices not in the dominating set. This idea leads to the definition of \emph{independent $[1,k]$-set} which is an independent dominating set $S$ of a graph $G$ such that every vertex $v\in V(G)\setminus S$ has at least one and at most $k$ neighbors in $S$  \cite{Che}. Clearly independent $[1,1]$-sets are precisely efficient dominating sets and independent $[1,2]$-sets could be considered as quasi-efficient dominating sets.

The calculation of domination parameters in grids has proved to be a difficult task. Indeed finding $\gamma(P_m\Box P_n)$ was an open problem for almost 30 years, since it was first studied in \cite{Jac} in relation with  Vizing's Conjecture \cite{Viz} which is still open. An important
milestone on the way to the solution is the upper bound $\gamma(P_m\Box P_n)\leq \big\lfloor\frac{(m+2)(n+2)}{5}\big\rfloor -4$, for  $8\leq m\leq n$  \cite{Cha}. In the same work the author also conjectured that equality is achieved in case $16\leq m\leq n$. The problem was completely solved in \cite{Gon} as authors were able to adapt the ideas in \cite{Gui} to confirm the conjecture. Meantime different efforts were made to calculate exact values of $\gamma(P_m\Box P_n)$, for  small fixed $m$ and for every $n\geq m$.

Among the different techniques used to address this problem, we would like to focus on a dynamic programming algorithm  by applying the (min,+) matrix multiplication. Values of $\gamma(P_m\Box P_n)$ for $m\leq 19$ and $n\geq m$ were obtained with this algorithm in \cite{Spa}. Similar ideas have been recently used to calculate the \emph{independent domination number} of grids \cite{CreOs}, which is the minimum cardinality of an independent dominating set for the graph $P_m\Box P_n$.

In this paper we adapt the constructions in \cite{CreOs,Spa} to solve the open problem proposed in \cite{Che} about the existence of independent $[1,2]$-sets in grids and we also compute the \emph{independent $[1,2]$-number} $\imath\dot{}_{[1,2]}(P_m\square P_n)$ which is the minimum cardinality of such a set, in every grid. To this end, in Section~\ref{sec:small} we devise a dynamic programming algorithm to compute $\imath\dot{}_{[1,2]}(P_m\square P_n)$. This algorithm could be theoretically applied in every grid, however the long running time needed makes it useful just on grids of small size, in our case $m\leq 13$ and $n\geq m$. On the other hand in Section~\ref{sec:big} we compute $\imath\dot{}_{[1,2]}(P_m\Box P_n)$ in grids with $14\leq m\leq n$ by means of a quasi-regular pattern, resembling the mentioned above construction for the upper bound of the domination number \cite{Cha}. In addition this pattern explicitly provides an independent $[1,2]$-set of minimum size.

All the graphs considered here are finite, undirected, simple and connected. For undefined basic concepts we refer the reader to basic graph theoretical literature as~\cite{chlepi11, Hay}.

\section{A dynamic programming algorithm to calculate $\imath\dot{}_{[1,2]}(P_m\Box P_n)$}\label{sec:small}

In this section we present a dynamic programming algorithm to prove the existence of independent $[1,2]$-sets in the grid $P_m\square P_n$ and to obtain $\imath\dot{}_{[1,2]}(P_m\square P_n)$, following the ideas in \cite{CreOs,Spa}.

Throughout this paper we will always consider the grid graph $P_m\square P_n$ as an array with $m$ rows, $n$ columns and with vertex set $\{ v_{ij}\colon 1\leq i\leq m,\ 1\leq j\leq n\}$. Let $S$ be an independent $[1,2]$-set of $P_m\square P_n$.
We define a labeling of vertices of $P_m\square P_n$ associated to $S$ as follows

$$
l(v_{ij})=
\left\{
\begin{array}{ll}
0&\text{ if }v_{ij}\in S\\
1&\text{ if }v_{ij}\notin S\text{ and }\vert \{v_{i (j-1)}, v_{(i-1) j}, v_{(i+1) j}\}\cap S\vert =1 \\
2&\text{ if }v_{ij}\notin S\text{ and }\vert \{v_{i (j-1)}, v_{(i-1) j}, v_{(i+1) j}\}\cap S\vert =2 \\
3&\text{ if }v_{ij}\notin S\text{ and }\vert \{v_{i (j-1)}, v_{(i-1) j}, v_{(i+1) j}\}\cap S\vert =0 \\
\end{array}
\right.
$$

Each label shows whether a vertex is in $S$ or not, and in the second case it shows how many neighbors does that vertex have in $S$, these neighbors are either belong to the vertex column or the previous one. If these cases do not hold, the label shows that the vertex has a unique neighbor in the following column.
Given an independent $[1,2]$-set, we can identify each vertex with its label so we obtain an array of labels with $m$ rows and $n$ columns. Hereinafter, given an independent $[1,2]$-set of $P_m\square P_n$, the columns of the grid are words of length $m$ in the alphabet $\{0,1,2,3\}$ and the number of zeros in the array is the cardinality of the independent $[1,2]$-set. The algorithm considers all the arrays of words (as columns) that come from some independent $[1,2]$-set of $P_m\square P_n$ and it calculates the minimum among the number of $0's$ in each array. This minimum is equal to $\imath\dot{}_{[1,2]}(P_m\square P_n)$.

It is clear that not every word of length $m$ can belong to such labeling, for instance there can not be two consecutive $0's$ in a word, because of independence. The first objective is to identify the words that belong to the labeling associated to some independent $[1,2]$-set.

We need to have in mind that not every word can be in the first column nor in the last one. A word in the first column has no neighbors in the previous one and a word in the last column has no label $3$, because they would not be dominated. The second task is to identify those words that can be in the first column and that can be in the last one.

It is also clear that not any two words can follow each other in a labeling associated to some independent $[1,2]$-set, for instance if a word has $3$ in the $r^{th}$ position, then the following word must have $0$ in the $r^{th}$ position, to preserve domination. The last objective regarding to words is to identify which of them can follow a given one.

The final part of the algorithm calculates the minimum number of zeros in an array, among all possible arrays of labels associated to an independent $[1,2]$-set, by means of the successive addition of columns.

For the rest of this section, let $S$ be an independent $[1,2]$-set of $P_m\square P_n$ and identify the graph with the array of the associated labels.

\subsection{The suitable words}

We are going to identify the words that belong to the labeling associated to some independent $[1,2]$-set. It is clear that a column can not contain two consecutive $0's$ because of independence, nor two consecutive $2's$ because in the previous column should be two consecutive $0's$, also it can not contain two consecutive $3's$ because in the following column will be two consecutive $0's$.

Moreover a column can not contain any of the sequences $03, 30, 010$ by the definition of labeling. If a column contains the sequence $11$ then the previous label or the following label in such column (or both) must be $0$, because in other case would be two consecutive $0's$ in the previous column. If a column contains the sequence $32$ then the other label next to $2$ in the column must be $0$, by the definition of  labeling. Similarly sequence $23$ must be preceded by $0$ and both sequences $21, 12$ must be placed between two $0's$.

\begin{definition}
A word of length $m$ in the alphabet $\{0,1,2,3\}$ satisfying all the rules described above is called $\emph{suitable}$.
\end{definition}

We denote the cardinal of the set of all suitable words by $k$, so every suitable word can be identified with $p\in \{1,\dots ,k\}$.

\subsection{The first column, the last column and the initial vector}

The first column is a suitable word with no neighbors in the previous column, as such column does not exist. Similarly the last column does not contain any $3$, because in this case these vertices would be not dominated. With these ideas we pose the following definition.

\begin{definition}

A suitable word is called \emph{initial} if every vertex labeled as $2$ is placed between two $0's$ and every vertex labeled as $1$ is preceded or followed (but not both) by $0$
and it is called \emph{final} if it has no vertex labeled as $3$.

The \emph{initial vector} $X^1$ is a column vector of size $k$ such that for every suitable word $p\in \{1,2,\dots , k\}$, the $p^{th}$ entry of the vector is
$$X^1(p)=
\left\{
\begin{array}{lcl}
\text{number of zeros of word }p &\dots & \text{ if } p \text{ is an initial word}\\
\infty &\dots & \text{ if } p \text{ is not an initial word}
\end{array}
\right.
$$
\end{definition}

\subsection{The rules for adding a column and the transition matrix}

Not every pair of suitable words can be consecutive columns in the labeling associated to some independent $[1,2]$-set, in order to preserve both independence and $[1,2]$-domination. Next we show the conditions needed to ensure that the word $p=p_1\dots p_m$ can follow the word $q=q_1\dots q_m$.
\par\medskip

If $q_i=0$, then
$\left\{
\begin{array}{l}
(\text{\it if } i=1)
\left\{
\begin{array}{l}
 p_1=1, p_2\neq 0 \ or\\
 p_1=2, p_2=0
 \end{array}
 \right. \\
 (\text{\it if }1<i<m)
 \left\{
 \begin{array}{l}
  p_i=1, p_{i-1}\neq 0, p_{i+1}\neq0 \ or \\
  p_i=2, p_{i-1}=0 \ or\ p_{i+1}=0\\

  \end{array}
  \right. \\
  (\text{\it if } i=m)
  \left\{
  \begin{array}{l}
  p_m=1, p_{m-1}\neq 0 \ or \\
  p_m=2, p_{m-1}=0
   \end{array}
   \right.
   \end{array}
 \right.$

 \par\bigskip

If $q_i=1$, then
$\left\{
\begin{array}{l}
(\text{\it if } i=1)
\left\{
\begin{array}{l}
p_1=0 \ or \\
p_1=1, p_{2}=0\ or \\
p_1=3
\end{array}
\right. \\
\\
(\text{\it if }1<i<m)
\left\{
\begin{array}{l}
p_i=0 \ or \\
p_i=1, p_{i-1}=0, p_{i+1}\neq 0  \ or \\
p_i=1, p_{i-1}\neq 0, p_{i+1}=0 \ or \\
p_i=2, p_{i-1}=0, p_{i+1}=0\ or \\
p_i=3
\end{array}
\right. \\
\\
(\text{\it if } i=m)
\left\{
\begin{array}{l}
p_m=0 \ or \\
p_m=1, p_{m-1}=0\ or \\
p_m=3
\end{array}
\right.
\end{array}
\right.
$

\par\bigskip

If $q_i=2$, then
$\left\{
\begin{array}{l}
(\text{\it if } i=1) \ p_1=3 \\
\\
(\text{\it if }1<i<m)
\left\{
\begin{array}{l}
p_i=1, p_{i-1}=0, p_{i+1}\neq 0  \ or \\
p_i=1, p_{i-1}\neq 0, p_{i+1}=0 \ or \\
p_i=3
\end{array}
\right.\\
\\
(\text{\it if } i=m) \ p_m=3
\end{array}
\right.
$

\par\bigskip

If $q_i=3$, then $p_i=0$, for $1\leq i\leq m$.

\par\bigskip

These properties lead us to the following definition.

\begin{definition}
We say that \emph{word $p$ can follow word $q$} if they satisfy all the properties described above.

The \emph{transition matrix} is the square matrix $A$ of size $k$ such that, for every pair $p, q\in \{1,2,\dots , k\}$, the entry $A_{pq}$ in row $p$ and column $q$ is defined as follows:
$$A_{pq}=
\left\{
\begin{array}{lcl}
\text{number of zeros of word }p &\dots & \text{ if word } p \text{ can follow word }q \\
&&\\
\infty &\dots &\text{ if word } p \text{ can not follow word }q
\end{array}
\right.
$$
\end{definition}

\subsection{Adding many columns via the min-plus multiplication}

Recall from \cite{CreOs} that $(min,+)$-algebra is a semiring which is defined by two operators on the real numbers, including infinity, $\boxplus$ representing minimization and $\boxtimes$ representing standard addition. The $(min,+)$-algebra, also called tropical \cite{Pin}, can be defined on matrices as $(A\boxtimes B)_{i,j}=\ \boxplus_{k=1}^{n}(a_{i,k}\boxtimes b_{k,j})$, where $A=(a_{i,j})$ and $B=(b_{i,j})$ are respectively, $m\times n$ and $n\times p$ matrices. Using this matrix multiplication, we can obtain, from the initial vector $X^1$ and the transition matrix $A$, vectors $X^2=A\boxtimes X^{1}, X^3=A\boxtimes X^{2}, \dots ,X^n=A\boxtimes X^{(n-1)}$. These vectors allows us to characterize grids having an independent $[1,2]$-set. To this end we first need the following lemma.

\begin{lem}\label{lem:finite}
Let $r\geq 2$ be an integer and let $p_r$ be a suitable word of length $m$.
\begin{enumerate}
\item $X^r(p_r)<\infty$ if and only if there exists suitable words $p_1,p_2,\dots p_{r-1}$ such that $p_1$ is initial and $p_i$ can follow $p_{i-1}$ for $2\leq i\leq r$.
\item If $X^n(p_n)<\infty$ then $X^n(p_n)$ is the minimum number of zeros in a labeling of vertices of $P_m\square P_n$, which has word $p_n$ in the $n^{th}$-column and such that the set of vertices labeled as zero is independent and $[1,2]$-dominates the graph, except vertices in the $n^{th}$-column with label $3$.
\end{enumerate}
\end{lem}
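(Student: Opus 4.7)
The plan is to prove part (1) first by a straightforward induction on $r$ unfolding the $(\min,+)$-recursion, and then to prove part (2) by exhibiting a correspondence between valid sequences of suitable words and valid partial labelings of the grid.

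For part (1), I would induct on $r\geq 2$. By definition $X^r(p_r) = \boxplus_q \bigl( A_{p_r q} \boxtimes X^{r-1}(q) \bigr)$, so $X^r(p_r) < \infty$ if and only if there is some suitable word $q$ with $A_{p_r q} < \infty$ (i.e.\ $p_r$ can follow $q$) and $X^{r-1}(q) < \infty$. The base case $r = 2$ follows directly because $X^1(q) < \infty$ precisely when $q$ is initial. In the inductive step, applying the inductive hypothesis to $q$ yields suitable words $p_1,\dots,p_{r-2}$ with the required follow relations and initial $p_1$, so setting $p_{r-1} := q$ completes the chain. The converse is the same computation read backwards.

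For part (2), the key observation is that the definitions of \emph{suitable word}, \emph{initial word} and \emph{word $p$ can follow word $q$} were engineered so that a sequence $p_1,\dots,p_n$ of suitable words with $p_1$ initial and each $p_i$ following $p_{i-1}$ corresponds bijectively to an array of labels of $P_m\square P_n$ whose zero-labeled vertices form an independent set that $[1,2]$-dominates every vertex of the grid, with the possible exception of those in column $n$ labeled $3$. I would establish this correspondence by case analysis: the intra-column rules (no $00$, $22$, $33$, $03$, $30$, $010$, and proper flanking of $11$, $21$, $12$, $23$, $32$) match exactly the local forbidden patterns forced by independence and by the counting definition of the label; the initial-word condition accounts for column $1$ having no left neighbors; and each clause in the lengthy definition of ``$p$ can follow $q$'' translates line by line into exactly one admissible way a column neighboring another can realize the label counts while preserving $[1,2]$-domination. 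Once this correspondence is verified, the total number of zeros in such a labeling equals $\sum_{i=1}^n z(p_i)$, where $z(p_i)$ is the number of zeros of $p_i$. Since $A_{p_i p_{i-1}} = z(p_i)$ whenever $p_i$ can follow $p_{i-1}$, unrolling the recursion gives $X^n(p_n) = \min_{p_1,\dots,p_{n-1}} \sum_{i=1}^n z(p_i)$, taken over valid sequences ending in $p_n$, which is exactly the minimum number of zeros among valid partial labelings whose $n$-th column is $p_n$.

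The main obstacle is the verification of the correspondence used in part (2): each of the many sub-cases appearing in the definition of ``can follow'' must be checked against the labeling rule in both directions, so that every valid labeled array produces a sequence satisfying the rules and every such sequence produces a valid labeled array. The delicate bookkeeping concerns which neighbors of a vertex in column $j$ already contribute to its label and which neighbors in column $j+1$ can still increase its count of neighbors in $S$, ensuring the cap of two is never exceeded while guaranteeing that every non-zero-labeled vertex (except possibly a vertex labeled $3$ in column $n$) ends up with at least one neighbor in $S$. Once this case-by-case audit is in place, the minimality assertion in part (2) is an immediate rewriting of the tropical recursion.
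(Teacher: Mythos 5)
Your proposal is correct and follows essentially the same route as the paper: part (1) by induction unfolding the $(\min,+)$ recursion, and part (2) by using the fact that the suitable/initial/can-follow rules were designed so that valid word sequences correspond exactly to labelings whose zero set is independent and $[1,2]$-dominates everything except label-$3$ vertices in the last column, with the zero count accumulating as $A_{p_i p_{i-1}}$ along the recursion. The only cosmetic difference is that you phrase part (2) as a global correspondence plus unrolling, while the paper formalizes the same content as a column-by-column induction on $n$; neither writes out the case-by-case audit you flag, which in the paper is carried by the definitions themselves.
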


\begin{proof}
\begin{enumerate}
\item Suppose that $r=2$, then $X^2(p_2)=\min \{ A_{p_2 1}+X^1(1), A_{p_2 2}+X^1(2),\dots , A_{p_2 k}+X^1(k)\}$, by definition of the $(min,+)$ matrix multiplication. Therefore $X^2(p_2)<\infty$ if and only if there exists an initial word $p_1$ such that $A_{p_2 p_1}<\infty$ or equivalently that $p_2$ can follow $p_1$.

By using induction,
assume that the statement is true of $r-1\geq 2$ and let $p_r$ be any suitable word. Then $X^r(p_r)=\min \{ A_{p_r 1}+X^{r-1}(1), A_{p_r 2}+X^{r-1}(2),\dots , A_{p_r k}+X^{r-1}(k)\}$, so $X^r(p_r)<\infty$ if and only if there exists a word $p_{r-1}$ such that both conditions $X^{r-1}(p_{r-1})<\infty$ and $A_{p_r p_{r-1}}<\infty$ hold. By the inductive hypothesis the first condition is equivalent to the existence of suitable words $p_1,p_2,\dots p_{r-2}$ such that $p_1$ is initial and $p_i$ can follow $p_{i-1}$ for $2\leq i\leq r-1$ and the second condition is equivalent to $p_r$ can follow $p_{r-1}$.

\item Assume that $X^2(p_2)<\infty$ then there exists an initial word $p_1$, so $X^1(p_1)<\infty$, such that $p_2$ can follow $p_1$. Both conditions ensure that, in the array with $p_1$ as first column and $p_2$ as second one, the set of vertices with label zero is independent and $[1,2]$-dominates $P_m\Box P_2$, except for vertices in the second column with label equal to $3$. Moreover $A_{p_2 p_1}+X^1(p_1)$ is equal to the number of vertices labeled as zero in the array, so $X^2(p_2)=\min \{ A_{p_2 1}+X^1(1),\dots , A_{p_2 k}+X^1(k)\}$ is the minimum number of vertices with label zero in an array with $p_2$ as second column.

    Again we will proceed by induction, so assume now that the statement is true for $n-1\geq 2$ and let $p_n$ be a suitable word with $X^n(p_n)<\infty$. Then, there exists a suitable word $p_{n-1}$ such that
    $A_{p_n p_{n-1}}+X^{n-1}(p_{n-1})<\infty$, so $X^{n-1}(p_{n-1})<\infty$ and $p_n$ can follow $p_{n-1}$. By the inductive hypothesis the value of $A_{p_n p_{n-1}}+X^{n-1}(p_{n-1})$ is the number of zeros in a labeling of vertices of $P_m\square P_n$, with the conditions in the statement, which has word $p_n$ in the $n^{th}$-column and $p_{n-1}$ in the $(n-1)^{th}$-column, and such that the number of zeros in the subgraph $P_m\square P_{n-1}$ is minimum. So $X^n(p_n)=\min \{ A_{p_n 1}+X^{n-1}(1),\dots , A_{p_n k}+X^{n-1}(k)\}$ is the minimum number of zeros among all the possible arrays which has word $p_n$ in the $n^{th}$-column and with the desired conditions.
\end{enumerate}
\end{proof}

Now we can characterize grids having an independent $[1,2]$-set, using the vectors obtained by means of the $(\min, +)$ matrix multiplication.

\begin{thm}\label{thm:existence}
There exists an independent $[1,2]$-set in $P_m\square P_r$ if an only if there exists a final word $p$ such that $X^r(p)<\infty$. Moreover in this case
$$\imath\dot{}_{[1,2]}(P_m\square P_r)=\min \{ X^r(p)\colon p \text{ is a final word} \}$$
\end{thm}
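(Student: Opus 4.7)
The plan is to derive the theorem directly from Lemma~\ref{lem:finite}, treating both directions separately and then deducing the formula for $\imath\dot{}_{[1,2]}(P_m\square P_r)$ from part (2) of that lemma. The only real content to verify is that ``final'' is exactly the extra column-level constraint needed at the last column, beyond what the transition rules already enforce for intermediate columns.

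First I would prove the forward direction. Given an independent $[1,2]$-set $S$ of $P_m\square P_r$, take the associated labeling and read its columns as words $p_1,\dots,p_r$. I would check that each $p_i$ is suitable (this is essentially how the rules defining suitability were extracted in the previous subsection), that $p_1$ is initial (the first column has no previous column, so every label $2$ must be flanked by two $0$'s in the column itself and every label $1$ must be flanked by exactly one $0$ in the column), that $p_i$ can follow $p_{i-1}$ for $2\le i\le r$ (this was how the transition matrix was built), and that $p_r$ is final (no vertex of $P_m\square P_r$ can carry label $3$, since label $3$ signals a neighbor in the next column, which does not exist for the last column). Applying Lemma~\ref{lem:finite}(1) to $p_r$ yields $X^r(p_r)<\infty$, and $p_r$ is final.

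Next I would prove the backward direction. Suppose a final word $p$ satisfies $X^r(p)<\infty$. By Lemma~\ref{lem:finite}(1), there exist suitable words $p_1,\dots,p_{r-1}$ with $p_1$ initial and each $p_i$ able to follow $p_{i-1}$, with $p_r=p$. Read the resulting $m\times r$ array as a candidate labeling and let $S$ be the set of vertices labeled $0$. By Lemma~\ref{lem:finite}(2), $S$ is independent and $[1,2]$-dominates every vertex of $P_m\square P_r$ except possibly those in column $r$ labeled $3$. Because $p_r=p$ is final, there are no such vertices, so $S$ is a genuine independent $[1,2]$-set.

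Finally, for the cardinality formula, I would observe that by Lemma~\ref{lem:finite}(2), for each final word $p$ with $X^r(p)<\infty$ the value $X^r(p)$ equals the minimum number of $0$'s (equivalently, of vertices of $S$) among all valid labelings ending in column $p$; the extra requirement ``$p$ is final'' guarantees that this minimum is attained by a genuine independent $[1,2]$-set. Conversely, every independent $[1,2]$-set arises this way from its last-column word, which must be final by the argument above. Taking the minimum over all final words therefore gives $\imath\dot{}_{[1,2]}(P_m\square P_r)$. The main subtle point, and the only place where care is needed, is the role of ``final'': intermediate columns can safely contain label $3$ because the transition rules force the next column to carry a $0$ there, whereas the last column has no such escape, and it is exactly this asymmetry that the definition of ``final word'' captures.
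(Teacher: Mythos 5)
Your proposal is correct and follows essentially the same route as the paper: both directions are deduced from Lemma~\ref{lem:finite} (part (1) for existence, part (2) for the cardinality formula), with the definition of \emph{final} word supplying exactly the missing domination requirement in the last column. You spell out more explicitly the equivalence between word chains (initial, can-follow, final) and genuine independent $[1,2]$-sets, which the paper dismisses with ``clearly,'' but the argument is the same.
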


\begin{proof}
 By condition 1 of Lemma~\ref{lem:finite}, $X^r(p_r)<\infty$ for a final word $p_r$ is equivalent to the existence of suitable words $p_1, p_2,\dots p_{r-1}, p_r$ such that $p_1$ is initial, $p_i$ can follow $p_{i-1}$ for $2\leq i\leq r$ and $p_r$ is final. Clearly  this condition is equivalent to the existence of an independent $[1,2]$-set in $P_m\square P_r$.

Finally by condition 2 of Lemma~\ref{lem:finite}, $\min \{ X^r(p)\colon p \text{ is a final word} \}$ is the minimum number of zeros in a labeling of vertices of $P_m\square P_r$, such that the set of vertices labeled as zero is independent and $[1,2]$-dominates the graph, so $\imath\dot{}_{[1,2]}(P_m\square P_r)=\min \{ X^r(p)\colon p \text{ is a final word} \}$.
\end{proof}

\subsection{The recursion rule to calculate $\imath\dot{}_{[1,2]}(P_m\square P_n)$}

The above theorem allows us to know if a fixed grid $P_m\square P_r$ has an independent $[1,2]$-set. We now provide a recurrence argument that ensures the existence of such sets in $P_m\square P_n$ for every $n$ big enough and that gives a formula for the independent $[1,2]$-number in those cases. We use the following result which is similar to Theorem 2.2 in \cite{CreOs}.

\begin{thm}~\label{thm:recurrence}
 Suppose that there exist integers $n_0, c, d > 0$ satisfying the equation $X^{n_{0}+d}{(p)}=X^{n_0}{(p)}+c$, for every suitable word $p$. If $P_m\square P_{r}$ has an independent $[1,2]$-set for $n_0\leq r\leq n_0+d-1$, then
 \begin{enumerate}
 \item $P_m\square P_{n}$ has an independent $[1,2]$-set, for all $n\geq n_0$,
 \item $\imath\dot{}_{[1,2]}(P_m\square P_{n+d})=\imath\dot{}_{[1,2]}(P_m\square P_n)+c$, for all $\ n\geq n_0.$
\end{enumerate}
\end{thm}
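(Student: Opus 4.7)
The plan is to lift the single-step recurrence $X^{n_0+d}(p)=X^{n_0}(p)+c$ to all later indices by induction on the number of additional multiplications by $A$, and then read both conclusions off Theorem~\ref{thm:existence}.

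First I would prove, by induction on $j\geq 0$, that
\begin{equation*}
X^{n_0+d+j}(p)=X^{n_0+j}(p)+c \quad \text{for every suitable word } p.
\end{equation*}
The base case $j=0$ is precisely the hypothesis. For the inductive step, since $X^{r+1}=A\boxtimes X^{r}$,
\begin{align*}
X^{n_0+d+j+1}(p) &= \min_{q}\bigl(A_{pq}+X^{n_0+d+j}(q)\bigr) \\
&= \min_{q}\bigl(A_{pq}+X^{n_0+j}(q)+c\bigr) \\
&= X^{n_0+j+1}(p)+c,
\end{align*}
with the convention $\infty+c=\infty$. In short, adding the constant $c$ to every entry of the vector commutes with the $(\min,+)$ action of $A$; this is where the tropical structure of the recurrence is essential.

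For part (1), given $n\geq n_0$, write $n=n_0+qd+s$ with $0\leq s\leq d-1$ and $q\geq 0$; iterating the relation above $q$ times starting at index $n_0+s$ yields $X^{n}(p)=X^{n_0+s}(p)+qc$ for every suitable $p$. The hypothesis guarantees that $P_m\square P_{n_0+s}$ admits an independent $[1,2]$-set, so Theorem~\ref{thm:existence} supplies a final word $p$ with $X^{n_0+s}(p)<\infty$. The same $p$ then satisfies $X^{n}(p)<\infty$, and a second application of Theorem~\ref{thm:existence} produces an independent $[1,2]$-set in $P_m\square P_{n}$.

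For part (2), both $P_m\square P_n$ and $P_m\square P_{n+d}$ admit independent $[1,2]$-sets by part (1), so Theorem~\ref{thm:existence} gives $\imath\dot{}_{[1,2]}(P_m\square P_{n})=\min\{X^{n}(p):p\text{ final}\}$ and the analogous identity for $n+d$. Since the relation $X^{n+d}(p)=X^{n}(p)+c$ holds on every suitable word, in particular on every final word, and the collection of final words does not depend on the index, the minimum over final words shifts by exactly $c$. The only nontrivial point in the whole argument is the commutation of the constant shift with $A\boxtimes(-)$; once that is verified, both conclusions reduce mechanically to Theorem~\ref{thm:existence}.
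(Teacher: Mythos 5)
Your proposal is correct and takes essentially the same route as the paper: both first show that the constant shift $c$ commutes with the $(\min,+)$ action of $A$, so $X^{n+d}(p)=X^{n}(p)+c$ for all $n\geq n_0$, and then invoke Theorem~\ref{thm:existence} together with the boundary hypothesis to get existence and the shift of $\min\{X^n(p)\colon p \text{ final}\}$ by $c$. Your use of the decomposition $n=n_0+qd+s$ is only a cosmetic variant of the paper's induction on $n$ in steps of $d$.
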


\begin{proof}

Since $X^{n_{0}+d}{(p)}-X^{n_0}{(p)}=c$, for all suitable words $p$, then we have  $A^{d}\boxtimes X^{n_0}=c\boxtimes X^{n_0}$. Now using induction we obtain
$X^{n+d}=A\boxtimes X^{n+d-1}=A\boxtimes c\boxtimes X^{n-1}=c\boxtimes X^{n},\ \text{for } n\geq n_0$ or equivalently $X^{n+d}(p)=X^n(p)+c$ for every word suitable word $p$ and every $n\geq n_0$.

 \begin{enumerate}
 \item Let $n\geq n_0$ be a integer. If $n_0\leq n\leq n_0+d-1$, by hypothesis $P_m\square P_{n}$ has an independent $[1,2]$-set so assume that $n_0+d\leq n$. By Theorem~\ref{thm:existence} we just need to show that $\min \{ X^n(p)\colon p \text{ is a final word} \}<\infty$. If $n=n_0+d$, then $\min \{ X^{n_0+d}(p)\colon p \text{ is a final word}\}=\min \{ X^{n_0}(p)+c \colon p \text{ is a final word} \}=\min \{ X^{n_0}(p) \colon p \text{ is a final word} \}+c<\infty$. We now proceed  by induction over $n$, so assume that $n_0+d<n$ and that $\min \{ X^r(p)\colon p \text{ is a final word} \}<\infty$ for $n_0+d\leq r<n$. Then $\min \{ X^{n}(p)\colon $ $p \text{ is a final word}\}=\min \{ X^{n-d}(p)+c \colon p \text{ is a final word} \}=\min \{ X^{n-d}(p) \colon$ $ p \text{ is a final word} \}+c<\infty$.

\item Let $n\geq n_0$, then $n+d\geq n_0$ and both grids $P_m\square P_n$ and $P_m\square P_{n+d}$ have independent $[1,2]$-sets. Moreover

\begin{align*}
\imath\dot{}_{[1,2]}(P_m\square P_{n+d})&=\min \{ X^{n+d}(p)\colon p \text{ is a final word} \}\\
&=\min \{ X^n(p)+c \colon p \text{ is a final word} \}\\
&=\min \{ X^n(p) \colon p \text{ is a final word} \}+c\\
&=\imath\dot{}_{[1,2]}(P_m\square P_n)+c
\end{align*}
\end{enumerate}
\end{proof}

\subsection{Results}
Using the algorithm described in this section we have proved the existence of independent $[1,2]$-sets and we have obtained $\imath\dot{}_{[1,2]}(P_m\square P_n)$ for $2\leq m \leq 13$ and for every $n\geq m$. Calculations took about 18 CPU hours (using a 3.3-GHz Intel Core I3-2120 CPU).

We listed all the suitable words of length $m$ and we constructed the initial vector $X^1$ and the transition matrix $A$. Then we calculated vectors $X^i=A\boxtimes X^{i-1}$, until getting integers $n_0,c,d\geq 1$ such that $X^{n_{0}+d}= c \boxtimes X^{n_{0}}$, where $n_0$ is the smallest integer satisfying the equation. These values can be found in Table~\ref{tab:table1}. According to Theorem~\ref{thm:recurrence}, we obtained the finite difference equation $\imath\dot{}_{[1,2]}(P_m\square P_{n+d})-\imath\dot{}_{[1,2]}(P_m\square P_{n})=c,\ \text{for } n\geq n_0$ (see Table~\ref{tab:table1}).

We show in Table~\ref{tab:table2} the boundary conditions for these finite difference equations, which are the values $\imath\dot{}_{[1,2]}(P_m\square P_r)$ for $n_0\leq r\leq n_0+d-1$, that we calculated by applying Theorem~\ref{thm:existence} to each vector $X^r$. So, by Theorem~\ref{thm:recurrence}, we can ensure the existence of at least one independent $[1,2]$-set in each $P_m\square P_n$ with $2\leq m \leq 13$ and for all $n\geq m$.

{\def\arraystretch{1.5}
\begin{table}[h!]
\centering
\caption{Finite difference equations for $\imath\dot{}_{[1,2]}(P_m\square P_n)=f_{m}(n)$}
\label{tab:table1}
$\scriptsize{
\begin{array}{r|r|r|r|l}
m & n_0&d&c &  \text{finite difference equation}\\
\hline
2&4&2&1&f_{2}(n\!+\!2)-f_{2}(n)\!=\!1\\
\hline
3&7&4&3&f_{3}(n\!+\!4)-f_{3}(n)=3\\
\hline
4&11&1&1&f_{4}(n\!+\!1)-f_{4}(n)=1\\
\hline
5&15&5&6&f_{5}(n\!+\!5)-f_{5}(n)=6\\
\hline
6&9&7&10&f_{6}(n\!+\!7)-f_{6}(n)=10\\
\hline
7&12&3&5&f_{7}(n\!+\!3)-f_{7}(n)=5\\
\hline
8&18&8&15&f_{8}(n\!+\!8)-f_{8}(n)=15\\
\hline
9&28&10&21&f_{9}(n\!+\!10)-f_{9}(n)=21\\
\hline
10&46&9&21&f_{10}(n\!+\!9)-f_{10}(n)=21\\
\hline
11&50&11&28&f_{11}(n\!+\!11)-f_{11}(n)=28\\
\hline
12&27&13&36&f_{12}(n\!+\!13)-f_{12}(n)=36\\
\hline
13&73&12&3&f_{13}(n\!+\!12)-f_{13}(n)=3\\
\hline
\end{array}
}$
\end{table}
}

\newpage

{\def\arraystretch{1.5}
\begin{table}[h!]
\centering
\caption{Boundary conditions}
\label{tab:table2}
$
\begin{array}{l|l|l|l}
m & n_0&d&\text{boundary conditions: } \imath\dot{}_{[1,2]}(P_m\square P_r), n_0\leq r\leq n_0+d-1\\
\hline
2&4&2&f_{2}(4)\!=\!3,f_{2}(5)\!=3\!\\
\hline
3&7&4&f_{3}(7)\!=\!6,f_{3}(8)\!=\!7,f_{3}(9)\!=\!7,f_{3}(10)\!=9\!\\
\hline
4&11&1&f_{4}(11)\!=\!11\\
\hline
5&15&5&f_{5}(15)\!=\!19,f_{5}(16)\!=\!20,f_{5}(17)\!=\!22,f_{5}(18)\!=\!23,f_{5}(19)\!=\!24\\
\hline
6&9&7&f_{6}(9)\!=\!14,f_{6}(10)\!=\!16,f_{6}(11)\!=\!17,f_{6}(12)\!=\!18,f_{6}(13)\!=\!20\\&&&f_{6}(14)\!=\!22,f_{6}(15)\!=\!22\\
\hline
7&12&3&f_{7}(12)\!=\!21,f_{7}(13)\!=\!22,f_{7}(14)\!=\!24\\
\hline
8&18&8&f_{8}(18)\!=\!35,f_{8}(19)\!=\!37,f_{8}(20)\!=\!39,f_{8}(21)\!=\!41\\
&&&f_{8}(22)\!=\!43,f_{8}(23)\!=\!45,f_{8}(24)\!=\!47,f_{8}(25)\!=\!48\\
\hline
9&28&10&f_{9}(28)\!=\!60,f_{9}(29)\!=\!63,f_{9}(30)\!=\!65, f_{9}(31)\!=\!66\\&&&f_{9}(32)\!=\!69,f_{9}(33)\!=\!71, f_{9}(34)\!=\!73\\&&& f_{9}(35)\!=\!75,f_{9}(36)\!=\!77,f_{9}(37)\!=\!80\\
\hline
10&46&9&f_{10}(46)\!=\!108,f_{10}(47)\!=\!111,f_{10}(48)\!=\!113\\&&&f_{10}(49)\!=\!115,f_{10}(50)\!=\!118,
f_{10}(51)\!=\!120\\&&&f_{10}(52)\!=\!122,f_{10}(53)\!=\!125,f_{10}(54)\!=\!127\\
\hline
11&50&11&f_{11}(50)\!=\!129,f_{11}(51)\!=\!132,f_{11}(52)\!=\!134,f_{11}(53)\!=\!137\\&&&f_{11}(54)\!=\!139,f_{11}(55)\!=\!142,f_{11}(56)\!=\!144,f_{11}(57)\!=\!147\\&&&
f_{11}(58)\!=\!150,f_{11}(59)\!=\!152,f_{11}(60)\!=\!155\\
\hline
12&27&13&f_{12}(27)\!=\!76,f_{12}(28)\!=\!79,f_{12}(29)\!=\!82,f_{12}(30)\!=\!85\\&&&f_{12}(31)\!=\!88,
f_{12}(32)\!=\!90,f_{12}(33)\!=\!93\\&&&f_{12}(34)\!=\!96,f_{12}(35)\!=\!99 f_{12}(36)=102\\&&&f_{12}(37)=104,f_{12}(38)=107,f_{12}(39)\!=\!110\\
\hline
13&73&12&f_{13}(73)\!=\!220,f_{13}(74)\!=\!224,f_{13}(75)\!=\!227,f_{13}(76)\!=\!229\\&&&f_{13}(77)\!=\!233,
f_{13}(78)\!=\!236,f_{13}(79)\!=\!238,f_{13}(80)\!=\!242\\&&&f_{13}(81)\!=\!245,
f_{13}(82)\!=\!247,f_{13}(83)\!=\!251,f_{13}(84)\!=\!254\\
\hline
\end{array}
$
\end{table}
}

The unique solution obtained from solving the finite difference equation for each $m$ is the desired formula for $\imath\dot{}_{[1,2]}(P_m\square P_n)$, $n\geq n_0$. In addition, we checked the values $\imath\dot{}_{[1,2]}(P_m\square P_s)$ for $m\leq s\leq n_0-1$ to obtain a formula for the independent $[1,2]$-number of grid $P_m\square P_n,$ with $2\leq m\leq 13$ and $n\geq m$.

The complete list is the following.
\par\medskip

$\imath\dot{}_{[1,2]}(P_2\square P_n)=\displaystyle\Big\lfloor \frac{n+2}{2} \displaystyle\Big\rfloor$
\par\medskip

$ \imath\dot{}_{[1,2]}(P_3\square P_n)=
\left\{
\begin{array}{lll}
\displaystyle\Big\lfloor \frac{3n+8}{4} \Big\rfloor &\dots& n\equiv 2\pmod 4\\
\\
\displaystyle\Big\lfloor \frac{3n+4}{4} \Big\rfloor &\dots& \text{otherwise}
\end{array}
\right.
$
\par\medskip

$ \imath\dot{}_{[1,2]}(P_4\square P_n)=
\left\{
\begin{array}{lll}
\displaystyle\ n+1 &\dots& n=5,6,9 \\
\\
\displaystyle\ n&\dots&  \text{otherwise}
\end{array}
\right.
$
\par\medskip

$ \imath\dot{}_{[1,2]}(P_5\square P_n)=\displaystyle\Big\lfloor \frac{6n+8}{5} \Big\rfloor$
\par\medskip

$ \imath\dot{}_{[1,2]}(P_6\square P_n)=
\left\{
\begin{array}{lcl}
\displaystyle\Big\lfloor \frac{10n+17}{7} \Big\rfloor &\dots& n\equiv 0,3\pmod 7,\ n\neq 7 \\
\\
\displaystyle\Big\lfloor \frac{10n+10}{7} \Big\rfloor&\dots& \text{otherwise}
\end{array}
\right.
$
\par\medskip

$\imath\dot{}_{[1,2]}(P_7\square P_n)=\displaystyle\Big\lfloor \frac{5n+3}{3} \Big\rfloor$
\par\medskip

$ \imath\dot{}_{[1,2]}(P_8\square P_n)=
\left\{
\begin{array}{lll}
\displaystyle\ 16 &\dots& n=8 \\
\\
\displaystyle\Big\lfloor \frac{15n+16}{8} \Big\rfloor &\dots& \text{otherwise}
\end{array}
\right.
$
\par\medskip

$ \imath\dot{}_{[1,2]}(P_9\square P_n)=
\left\{
\begin{array}{lll}
\displaystyle\Big\lfloor \frac{21n+28}{10} \Big\rfloor &\dots& n\equiv 0,7,9\pmod {10}\\
\\
\displaystyle\Big\lfloor \frac{21n+18}{10} \Big\rfloor &\dots& \text{otherwise}
\end{array}
\right.
$
\par\medskip

$ \imath\dot{}_{[1,2]}(P_{10}\square P_n)=
\left\{
\begin{array}{lcl}
\displaystyle\Big\lfloor \frac{21n+23}{9} \Big\rfloor &\dots& n=12,18,21,30\\
\\
\displaystyle\Big\lfloor \frac{21n+14}{9} \Big\rfloor &\dots& \text{otherwise}
\end{array}
\right.
$
\par\medskip

$\imath\dot{}_{[1,2]}(P_{11}\square P_n)=\displaystyle\Big\lfloor \frac{28n+26}{11} \Big\rfloor$
\par\medskip

$\imath\dot{}_{[1,2]}(P_{12}\square P_n)=
\left\{
\begin{array}{lcl}
\displaystyle\Big\lfloor \frac{36n+41}{13} \Big\rfloor &\dots& n\equiv 10\pmod {13}\\
\\
\displaystyle\Big\lfloor \frac{36n+28}{13} \Big\rfloor&\dots& \text{otherwise}
\end{array}
\right.
$
\par\medskip

$\imath\dot{}_{[1,2]}(P_{13}\square P_n)=
\left\{
\begin{array}{lcl}
\displaystyle\ 3n+1 &\dots& n\equiv 1,4,7,10\pmod {12}\\
\\
\displaystyle\ 3n+2 &\dots& \text{otherwise}
\end{array}
\right.
$

\section{Independent $[1,2]$-sets and independent $[1,2]$-number in big grids}\label{sec:big}

In this section we follow the construction used in \cite{CreOs} to obtain an independent $[1,2]$-set in grids $P_m\Box P_n$, for $14\leq m\leq n$.
We also calculate the exact value of $\imath\dot{}_{[1,2]}(P_m\Box P_n)$, for $14\leq m\leq n$. In fact we notice that the same proof of Theorem 3.1 of \cite{CreOs} to determine the independent domination number of grids $P_m\Box P_n$, with $16\leq m\leq n$, is also valid for the independent $[1,2]$-number. On the other hand we slightly modify this proof to adapt it to the remaining cases $14=m\leq n$ and $15=m\leq n$.

\begin{thm}
If $14\leq m\leq n$ then
$$\imath\dot{}_{[1,2]}(P_m\Box P_n)=\Bigg \lfloor \frac{(m+2)(n+2)}{5}\Bigg\rfloor -4$$
\end{thm}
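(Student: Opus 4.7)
The plan is to prove matching upper and lower bounds on $\imath\dot{}_{[1,2]}(P_m\Box P_n)$.

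For the \emph{upper bound} I will exhibit an explicit independent $[1,2]$-set of size $\lfloor (m+2)(n+2)/5\rfloor-4$, obtained by restricting to $P_m\Box P_n$ the infinite diagonal perfect code $\{v_{ij}:2i+j\equiv r\pmod 5\}$ (for a suitable residue $r$) and then applying the same constant-width boundary modifications used by Chang \cite{Cha} and in Theorem 3.1 of \cite{CreOs}. Those papers already verify the exact cardinality, independence and dominance of the resulting set, so what remains to check is the $[1,2]$-condition, namely that no vertex outside the set has three neighbors inside it. Away from the border this is automatic, since in the infinite diagonal lattice each non-code vertex has \emph{exactly} one code neighbor; the only candidates for failure therefore lie within $O(1)$ rows or columns of the border where the correction acts, and they can be ruled out by a short local inspection, performed once for each of the four sides and four corners.

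For the \emph{lower bound} I split into two regimes. When $16\le m\le n$, every independent $[1,2]$-set is in particular an independent dominating set, so $\imath\dot{}_{[1,2]}(P_m\Box P_n)\ge \imath(P_m\Box P_n)$; Theorem 3.1 of \cite{CreOs} identifies the right-hand side with $\lfloor(m+2)(n+2)/5\rfloor-4$ in this range, which finishes the argument and is the content of the authors' remark that ``the same proof'' applies. For the two remaining widths $m\in\{14,15\}$ the independent domination number is strictly smaller than $\lfloor(m+2)(n+2)/5\rfloor-4$, so this shortcut is not available and the CreOs argument itself must be revisited. The strategy is to mimic the discharging / region-counting scheme of \cite{CreOs} directly on the extended grid $P_{m+2}\Box P_{n+2}$: assign to every $s\in S$ its closed neighborhood in the extended grid as a region of size at most $5$, bound the global region count, and collect the boundary deficit. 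The $[1,2]$-hypothesis, which is strictly stronger than independent domination, is precisely what rules out the local configurations near the top and bottom rows that would otherwise let the bound drop by the two units separating the value of $\imath$ from $\lfloor(m+2)(n+2)/5\rfloor-4$ for these two widths.

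The principal obstacle I expect is exactly the handling of the cases $m=14$ and $m=15$: for $m\ge 16$ the proof essentially reduces to a citation, whereas in the two special widths the boundary accounting of \cite{CreOs} must be redone with the $[1,2]$ hypothesis in hand. Concretely, this means re-enumerating the extremal column patterns within a few rows of the horizontal borders, and verifying that the configurations which a pure independent dominating set could exploit to save two vertices all involve a non-set vertex with three set-neighbors, and hence are forbidden by the $[1,2]$ constraint.
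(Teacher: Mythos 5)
Your lower-bound treatment of the widths $m=14,15$ rests on a false premise, and that is where the proposal breaks. You assert that for these two widths the independent domination number is strictly smaller than $\lfloor(m+2)(n+2)/5\rfloor-4$, so that the citation shortcut fails and the Crevals--\"Osterg{\aa}rd counting argument must be redone with the $[1,2]$-hypothesis. In fact \cite{CreOs} gives $i(P_{14}\Box P_n)=\lfloor(16n+12)/5\rfloor$ for $n\geq 14$ and $i(P_{15}\Box P_n)=\lfloor(17n+14)/5\rfloor$ for $n\geq 15$, and these are exactly equal to $\lfloor(m+2)(n+2)/5\rfloor-4$ (shift the numerator by $20$ to see the identity). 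So the same one-line lower bound $\imath\dot{}_{[1,2]}\geq i$ works uniformly for all $14\leq m\leq n$; no discharging needs to be revisited, and the paper's proof does exactly this. Worse, the replacement you sketch (``assign each $s\in S$ a region of size at most $5$, bound the region count, collect the boundary deficit'') is a one-sentence gesture at what is historically the hard part of grid domination lower bounds, and the claim that the $[1,2]$-condition recovers two units near the horizontal borders is asserted, not argued. As written, that portion of the proposal is not a proof of anything, and it is aimed at a statement that is not even needed.

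On the upper bound your plan is essentially the paper's for $16\leq m\leq n$: take $V_s=\{v_{ij}\colon 2i+j\equiv s\pmod 5\}$ on the extended grid, push boundary vertices inside, apply the corrections of \cite{Cha,CreOs}, and check the $[1,2]$-condition locally near the border. But for $m\in\{14,15\}$ you cannot simply reuse those corrections: the construction in Theorem 3.1 of \cite{CreOs} requires four vertex-disjoint $8\times 8$ corners, which do not fit in a grid of width $14$ or $15$. The paper handles this by exhibiting explicit minimum independent $[1,2]$-sets for the small cases ($n\leq 17$) and by redesigning the boundary modifications on left and right strips of width $9$ for $n\geq 18$, checked case by case for each residue $s$. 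Your ``short local inspection'' underestimates this, although it is a repairable amount of work; the unrepaired and essential gap is the $m=14,15$ lower bound discussed above.
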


\proof
Let $m,n$ be integers such that $14\leq m\leq n$. In \cite{CreOs} it is shown that
\begin{align*}
i(P_{14}\Box P_n)= &\Big \lfloor \frac{16n+12}{5}\Big\rfloor=\Big \lfloor \frac{(14+2)(n+2)}{5}\Big\rfloor -4, \text{ for } 14\leq n\\
i(P_{15}\Box  P_n)=& \Big \lfloor \frac{17n+14}{5}\Big\rfloor=\Big \lfloor \frac{(15+2)(n+2)}{5}\Big\rfloor -4,\text{ for }15\leq n\\
i(P_m\Box  P_n)=&\Big \lfloor \frac{(m+2)(n+2)}{5}\Big\rfloor -4,\text{ for } 16\leq m\leq n
\end{align*}
Using that the independent domination number is a lower bound of the independent $[1,2]$-number, if this exists, we just need to prove the inequality $\imath\dot{}_{[1,2]}(P_m\Box  P_n)\leq \big \lfloor \frac{(m+2)(n+2)}{5}\big\rfloor -4$ and to this end we will construct an independent $[1,2]$-set with this number of vertices for each case, solving also the question of the existence of such sets.

Examples of independent $[1,2]$-sets of $P_{14}\Box P_n$ with $\big\lfloor \frac{(14+2)(n+2)}{5}\big\rfloor - 4$ vertices are shown in Figure~\ref{fig:case_14}, for $n=14,15,16,17$. Similarly in Figure~\ref{fig:case_15} we show examples of independent $[1,2]$-sets of $P_{15}\Box P_n$ with $\big\lfloor \frac{(15+2)(n+2)}{5}\big\rfloor - 4$ vertices, for $n=15,16,17$.

\begin{figure}[htbp]
  \begin{center}
    \subfigure[$\imath\dot{}_{[1,2]}(P_{14}\Box P_{14})\leq 47$]{\includegraphics[width=0.3\textwidth]{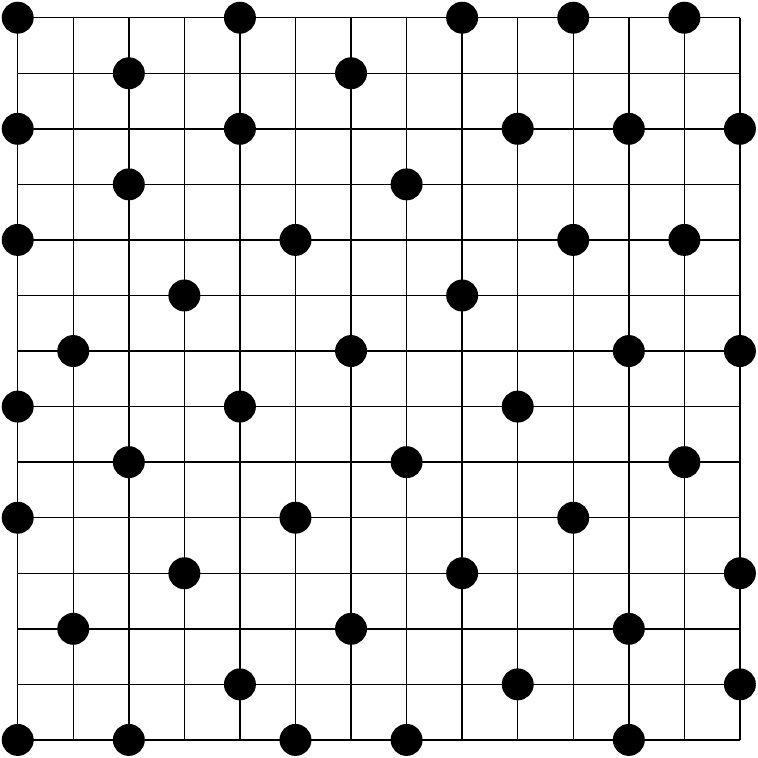}\label{fig:case_14_14}} \hspace{1cm}
    \subfigure[$\imath\dot{}_{[1,2]}(P_{14}\Box P_{15})\leq 50$]{\includegraphics[width=0.32\textwidth]{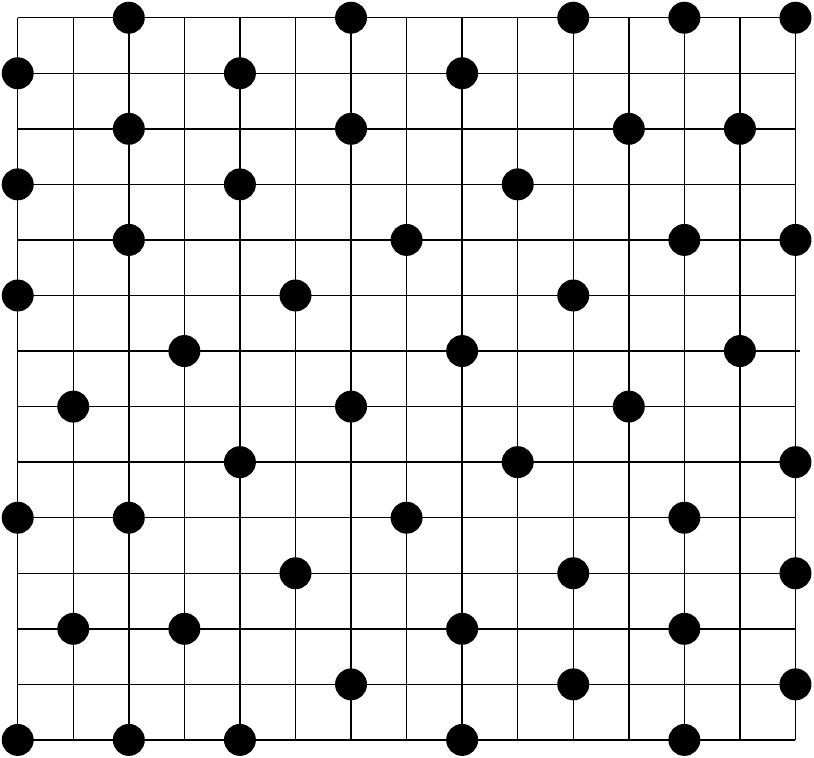}\label{fig:case_14_15}}\hspace{0.7cm}\\

     \subfigure[$\imath\dot{}_{[1,2]}(P_{14}\Box P_{16})\leq 53$]{\includegraphics[width=0.33\textwidth]{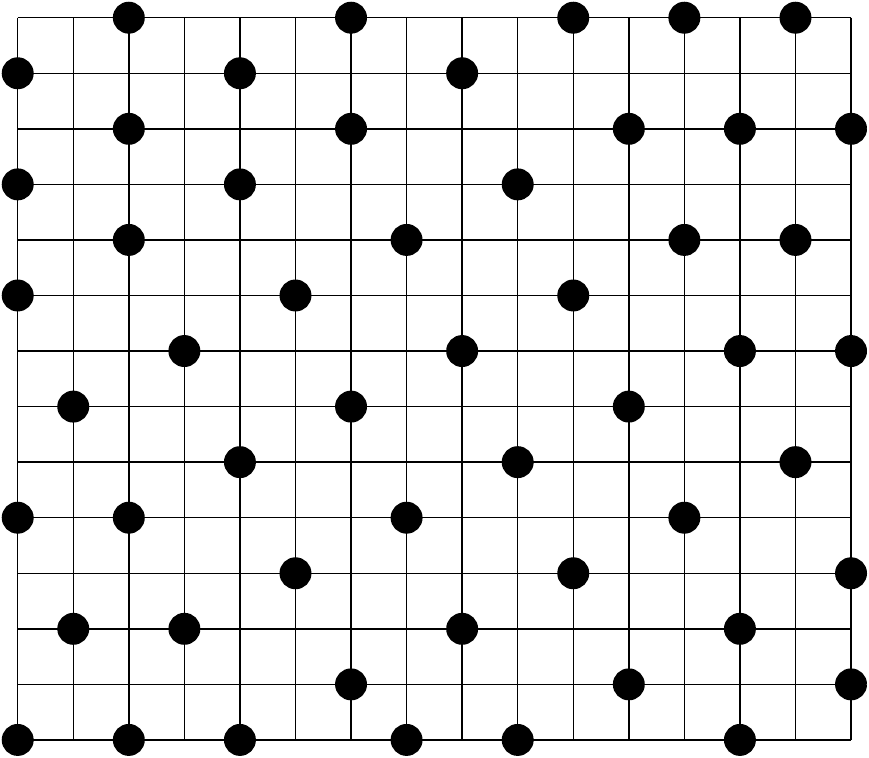}\label{fig:case_14_16}}\hspace{1cm}
      \subfigure[$\imath\dot{}_{[1,2]}(P_{14}\Box P_{17})\leq 56$]{\includegraphics[width=0.35\textwidth]{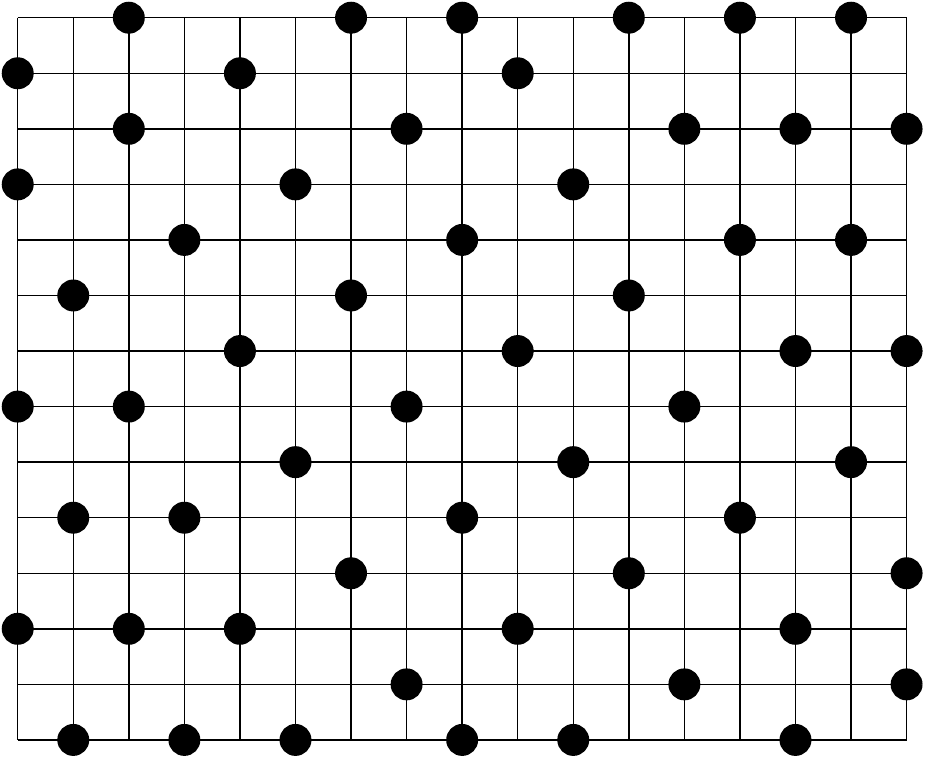}\label{fig:case_14_17}}
\caption{Independent $[1,2]$-sets of $P_{14}\Box P_{n}$ with $\Big\lfloor \frac{(14+2)(n+2)}{5}\Big\rfloor-4$ vertices, for $14\leq n\leq 17$.}\label{fig:case_14}
  \end{center}
\end{figure}

\begin{figure}[htp]
  \begin{center}
    \subfigure[$\imath\dot{}_{[1,2]}(P_{15}\Box P_{15})\leq 53$]{\includegraphics[width=0.27\textwidth]{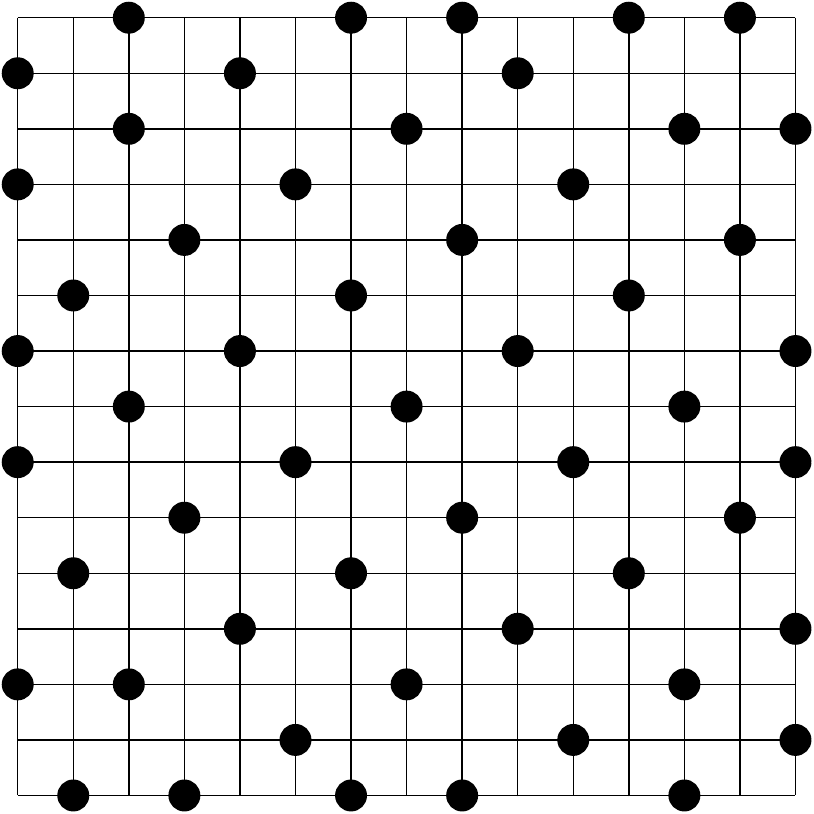}\label{fig:case_15_15}} \hspace{0.4cm}
    \subfigure[$\imath\dot{}_{[1,2]}(P_{15}\Box P_{16})\leq 57$]{\includegraphics[width=0.30\textwidth]{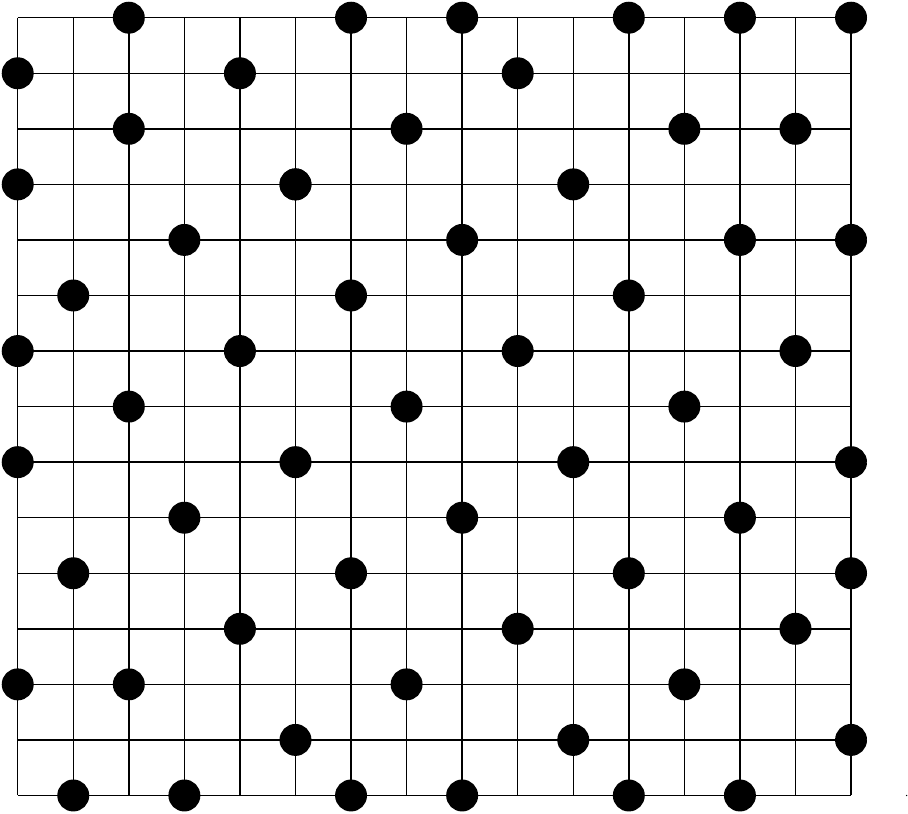}\label{fig:case_15_16}}\hspace{0.4cm}
     \subfigure[$\imath\dot{}_{[1,2]}(P_{15}\Box P_{17})\leq 60$]{\includegraphics[width=0.31\textwidth]{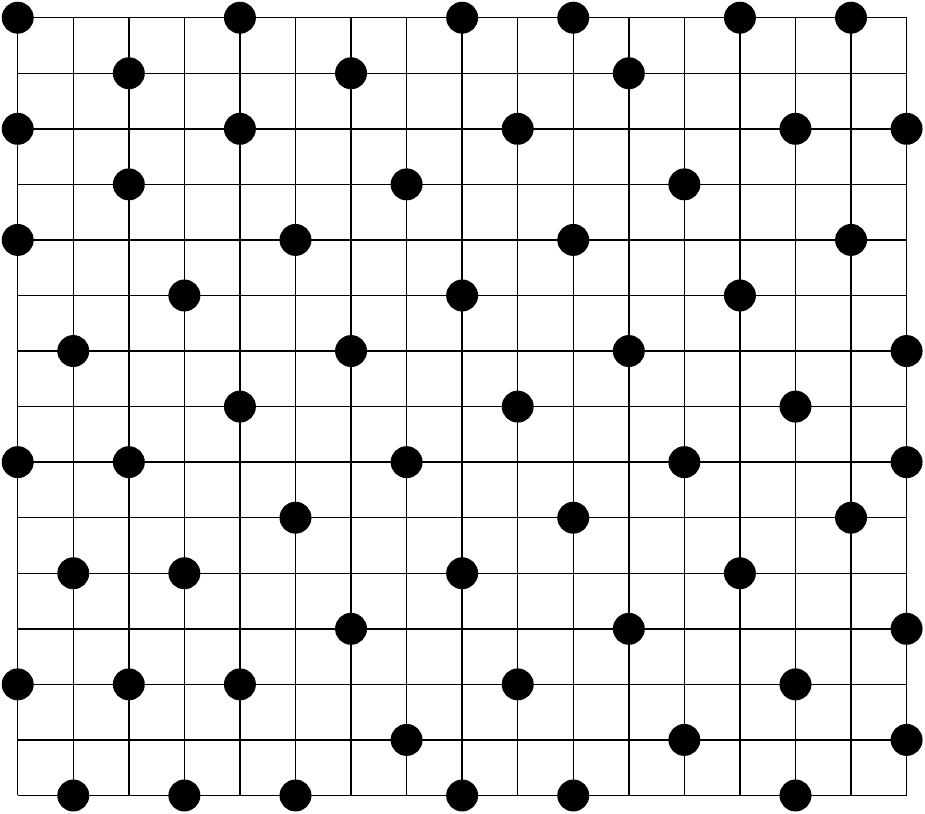}\label{fig:case_15_17}}\hspace{0.4cm}

\caption{Independent $[1,2]$-sets of $P_{15}\Box P_{n}$ with $\Big\lfloor \frac{(15+2)(n+2)}{5}\Big\rfloor-4$ vertices, for $n=15,16,17$.}\label{fig:case_15}
  \end{center}
\end{figure}

For the rest of cases we will use the construction shown in Theorem 3.1 of \cite{CreOs}, that we recall here. Consider the grid with $(m+2)\times (n+2)$ vertices and denote its vertex set $V=\{v_{ij}\colon 0\leq i\leq m+1, 0\leq j\leq n+1\}$. Then the grid $P_m\Box  P_n$ can be identified with the \emph{inner grid}, which is the subgraph induced by the vertex subset $\{v_{ij}\colon 1\leq i\leq m, 1\leq j\leq n\}$. Define a partition of the set $V$ with five subsets $V_s=\{ v_{ij}\colon 2i+j\equiv s\ (mod\ 5)\}$, $s=0,1,2,3,4$. Consider the set $V'_s$ obtained from $V_s$ by replacing each vertex outside the inner grid by its neighbor inside. It is clear that every $V_s$ is a dominating set of the inner grid, so $V'_s$ is a dominating set (not necessarily independent nor $[1,2]$-dominating) of $P_m\Box  P_n$. On the other hand, using that $V_0,V_1,V_2,V_3,V_4$ form a partition of $V$, there exists $s$ such that $|V'_s|\leq |V_s|\leq \big \lfloor \frac{(m+2)(n+2)}{5}\big\rfloor$. Now each $V'_s$ can be modified, deleting some vertices and adding other ones, to obtain $W_s$ an independent $[1,2]$-set of $P_m\Box P_n$ with $|W_s|=|V_s|-4\leq \big\lfloor \frac{(m+2)(n+2)}{5}\big\rfloor - 4$, as desired.

In case $16\leq m\leq n$, it is straightforward to check that the independent dominating set of $P_m\Box  P_n$ shown in Theorem 3.1 of \cite{CreOs} is also a $[1,2]$-set and it has cardinal at most $\big\lfloor\frac{(m+2)(n+2)}{5}\big\rfloor-4$.

Consider now the grid $P_{14}\Box P_n$ with $n\geq 18$. We define the \emph{left strip} as the subgraph generated by vertices $L=\{ v_{ij}\colon 1\leq i\leq 14, 1\leq j\leq 9\}$ and the \emph{right strip} as the subgraph generated by vertices $R=\{v_{ij}\colon 1\leq i\leq 14, n-8\leq j\leq n\}$. Note that both subgraphs have no common vertices and we work separately with them. Firstly let us focus on left strip. For each $s\in \{0,1,2,3,4\}$ we consider the vertex subset $V'_s \cap L$ and we will change some vertices on it, in order to obtain a new set with cardinal $|V_s \cap L|-2$.

For the case $s=0$ the set $V'_0$ is an independent $[1,2]$-set in the left strip with $v_{\stackrel{}{0,0}},v_{\stackrel{}{15,0}}\in V_0$, so $|V'_0 \cap L|=|V_0\cap L|-2$ and no change is needed. For each $s\in \{1,2,3,4\}$ modifications needed to obtain the desired set are shown in Figure~\ref{fig:left_14}.

\begin{figure}[htb]
  \begin{center}
    \subfigure[Case $s=1$]{\includegraphics[width=0.47\textwidth]{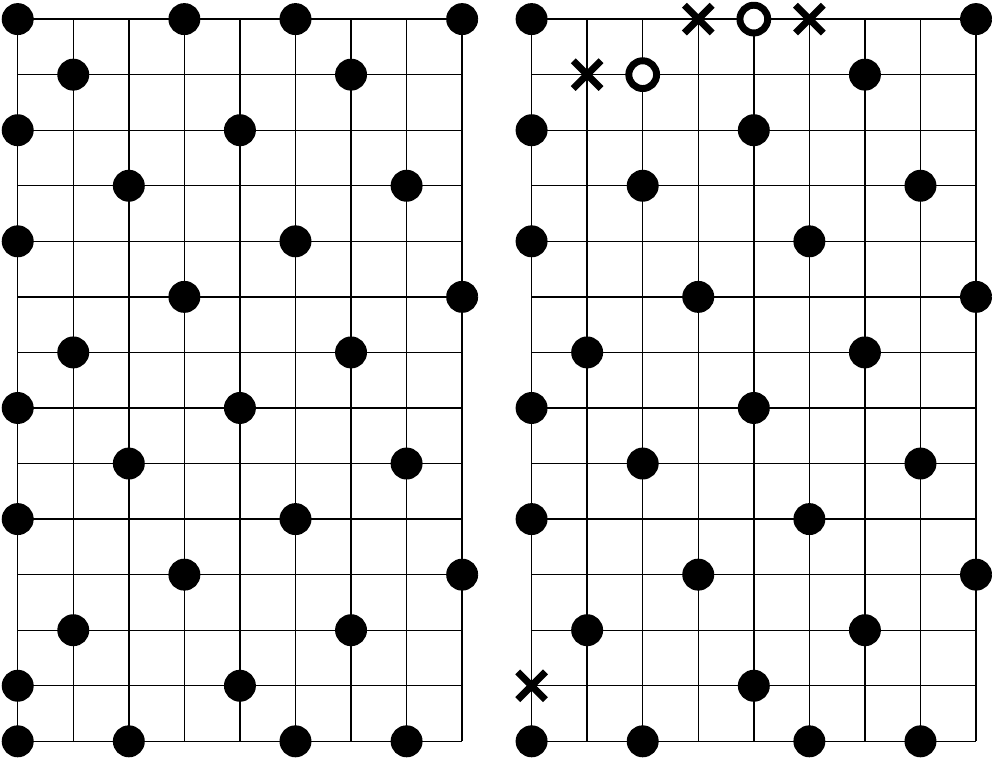}\label{fig:left_14_1}} \hspace{0.5cm}
    \subfigure[Case $s=2$]{\includegraphics[width=0.47\textwidth]{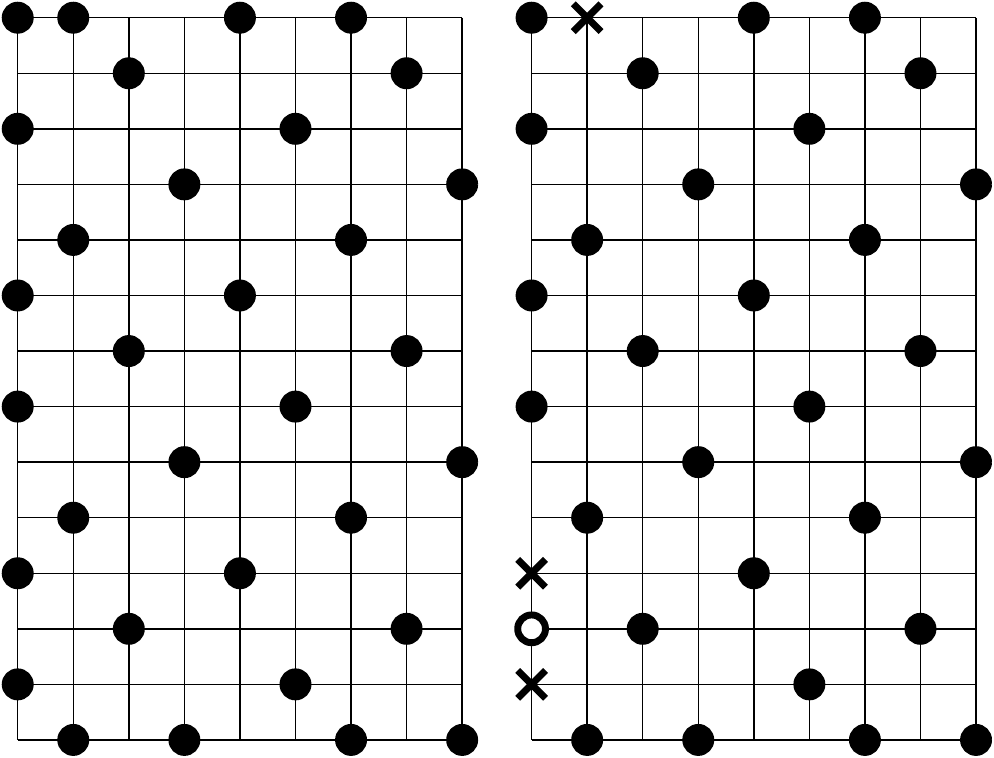}\label{fig:left_14_2}}
     \subfigure[Case $s=3$]{\includegraphics[width=0.47\textwidth]{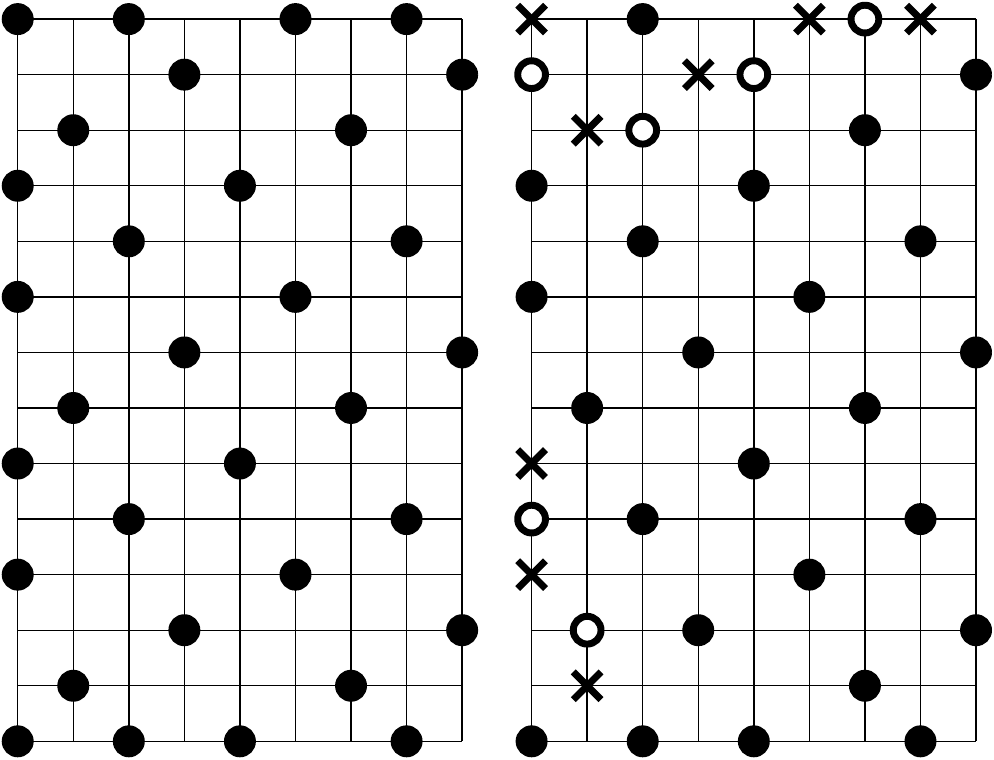}\label{fig:left_14_3}} \hspace{0.5cm}
    \subfigure[Case $s=4$]{\includegraphics[width=0.47\textwidth]{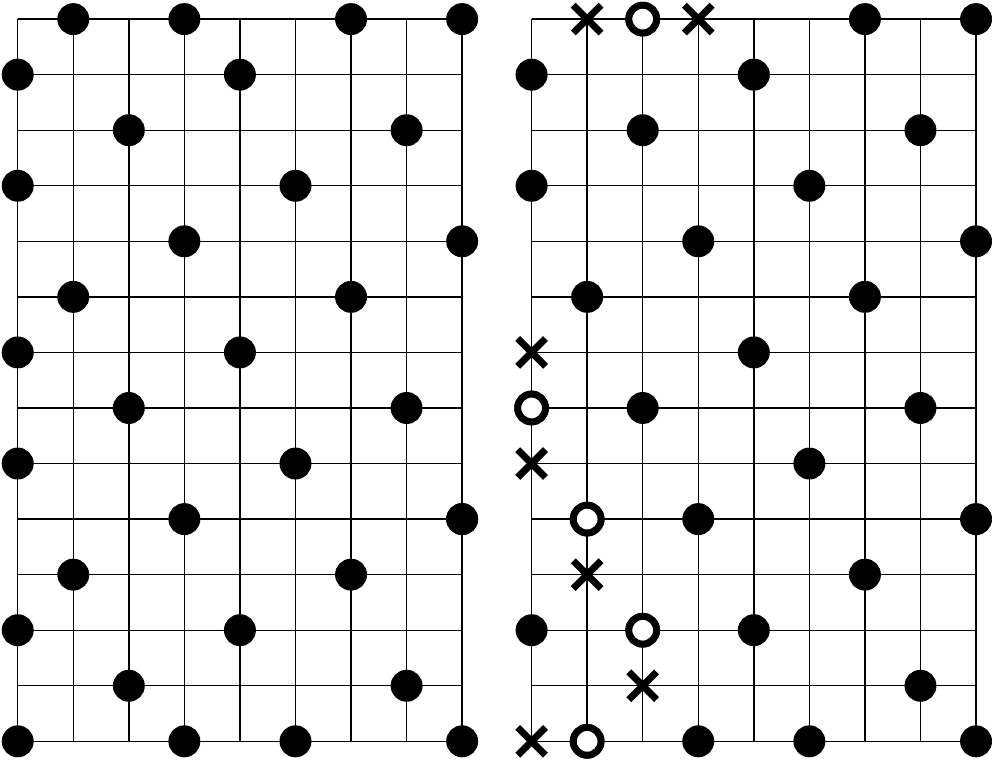}\label{fig:left_14_4}}
\caption{Changes on the left strip of grid $P_{14}\Box P_n$, cases s=1,2,3,4}\label{fig:left_14}
  \end{center}
\end{figure}

In each case, figure on the left shows black vertices in $V'_s\cap L$ and figure on the right shows vertices deleted (crossed) and added (white) to this set. Note that in each case the resulting set is independent, has cardinal two less than the cardinal of the original one, and it $[1,2]$-dominates the first eight columns on the left strip. We would also like to point out that domination of vertices on column number nine and the following ones is not affected by the changes we made.

By symmetry, a similar argument can be used for the right strip. Finally by choosing the appropriate $s\in\{0,1,2,3,4\}$ we obtain the desired independent $[1,2]$-set with cardinal at most $\lfloor \frac{(m+2)(n+2)}{5}\rfloor -4$,  consisting on vertices in both modified strips in addition with vertices of $V'_s$ laying in the inner columns.

The construction for the grid $P_{15}\Box P_n$ with $n\geq 18$ is similar. Firstly for the case $s=0$ the set $V'_0$ is an independent $[1,2]$-set in the left strip with $v_{\stackrel{}{0,0}}\in V_0$, so $|V'_0 \cap L|=|V_0\cap L|-1$ and we need to reduce its cardinal just by one, as it is shown in Figure~\ref{fig:left_15_2}. Similarly if $s=2$ then $V'_2$ is an independent $[1,2]$-set in the left strip with $v_{\stackrel{}{0,16}}\in V_2$, so $|V'_2 \cap L|=|V_2\cap L|-1$ and we need to reduce its cardinal just by one, as it is shown in Figure~\ref{fig:left_15_2}.

\par\medskip

\begin{figure}[htb]
  \begin{center}
    \subfigure[Case $s=0$]{\includegraphics[width=0.47\textwidth]{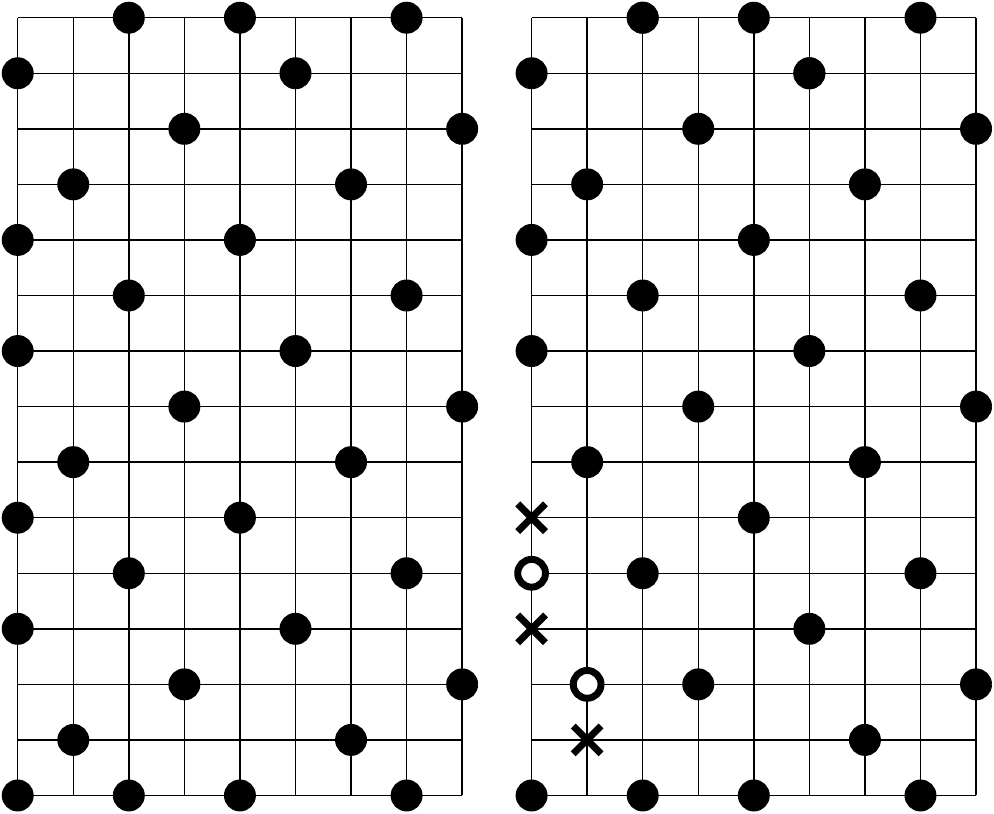}\label{fig:left_15_0}}\hspace{0.5cm}
    \subfigure[Case $s=2$]{\includegraphics[width=0.47\textwidth]{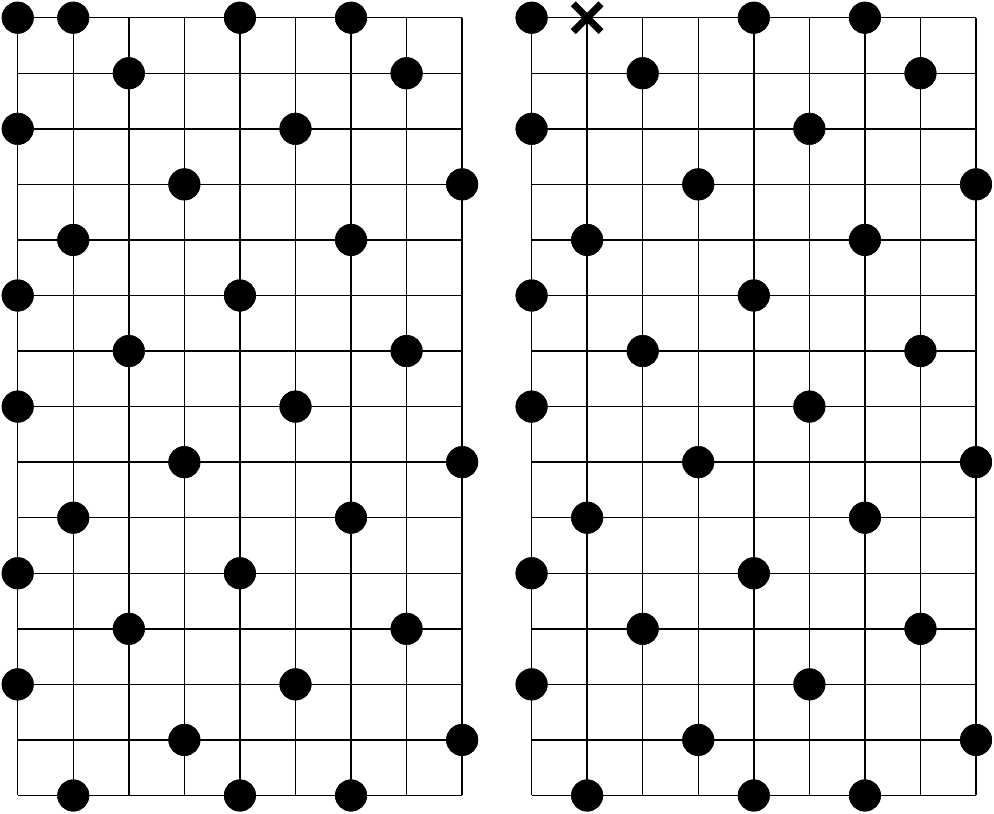}\label{fig:left_15_2}}
    \caption{Changes on the left strip of grid $P_{15}\Box P_n$, cases $s=0$ and $s=2$}\label{fig:left_15}
  \end{center}
\end{figure}

The modifications needed for each $s\in \{1,3,4\}$ are in Figure~\ref{fig:left_15bis} and again changes in the right strip are the similar up to symmetry. Finally, by choosing the appropriate $s\in\{0,1,2,3,4\}$ we obtain the desired independent $[1,2]$-set with cardinal at most $\lfloor \frac{(m+2)(n+2)}{5}\rfloor -4$, consisting on vertices in both modified strips in addition with vertices of $V'_s$ laying in the inner columns.
\qed

\begin{rmk}
Note that our construction of independent $[1,2]$-sets for $P_{14}\Box P_n$ and $P_{15}\Box P_n$ is a modification of the construction shown in Theorem 3.1 of \cite{CreOs}, however we can not apply it directly because the proof in that theorem requires to divide the grid into four vertex-disjoint corners of size $8\times 8$, and this does not hold for these two special cases of grids because of their size.
\end{rmk}

\renewcommand{\textfraction}{0.01}
\begin{figure}[htb]
  \begin{center}
    \subfigure[Case $s=1$]{\includegraphics[width=0.47\textwidth]{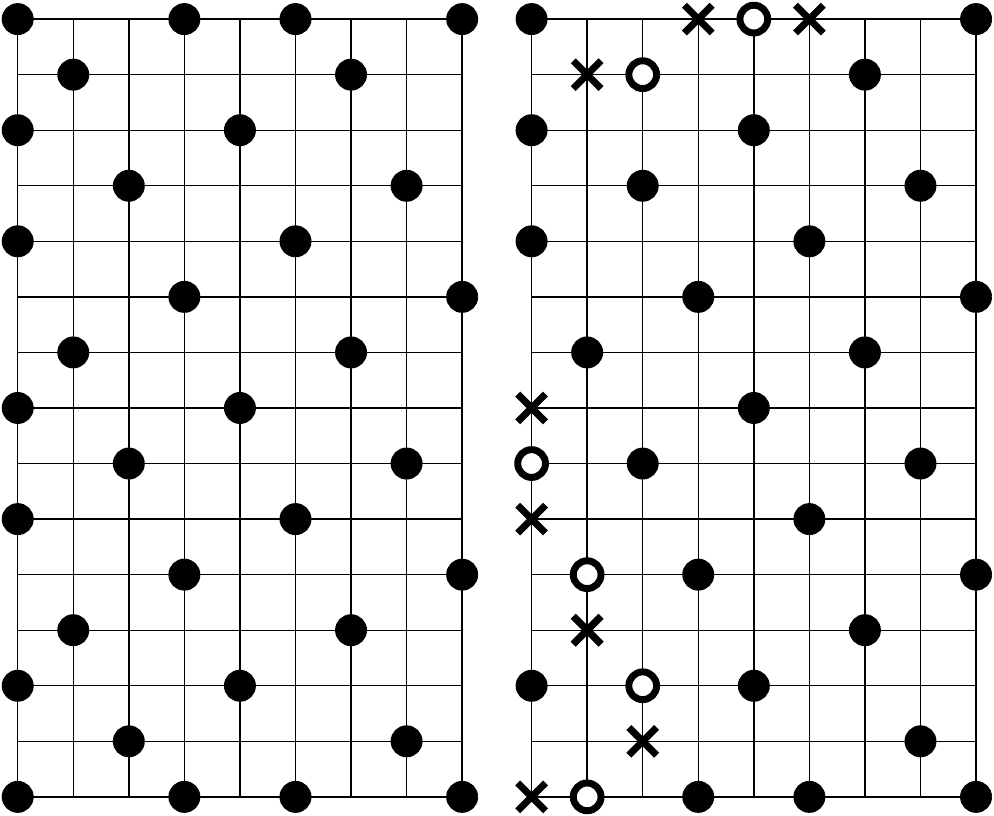}\label{fig:left_15_1}}\hspace{0.5cm}
    \subfigure[Case $s=3$]{\includegraphics[width=0.47\textwidth]{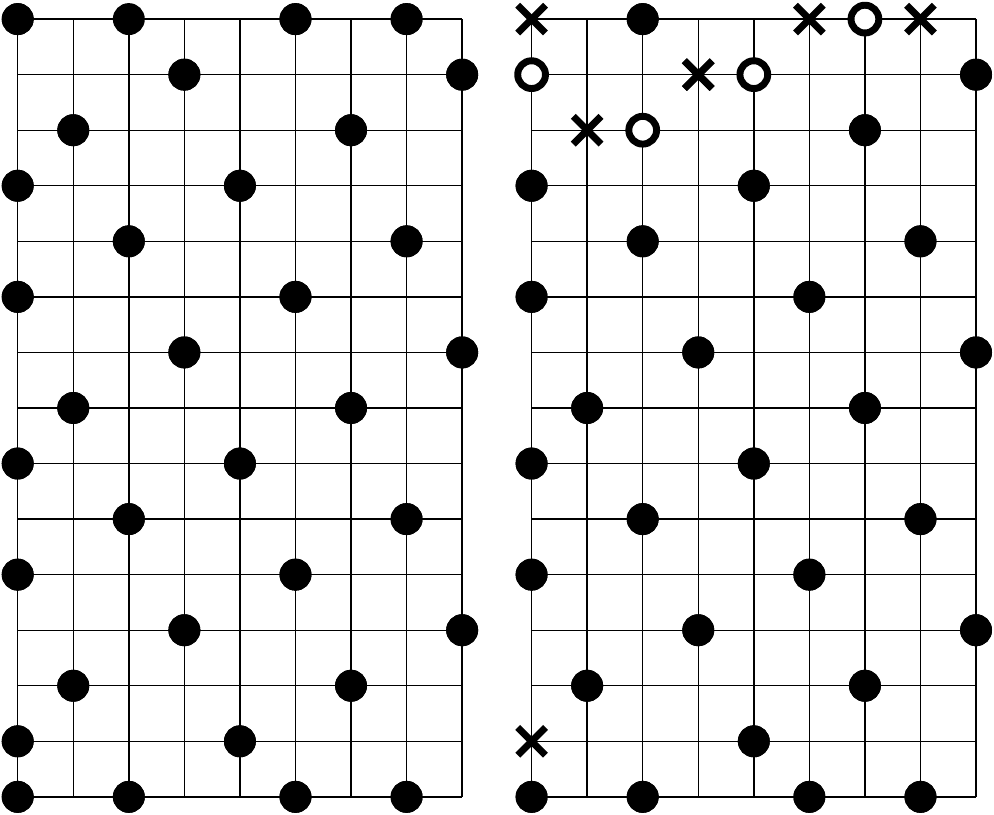}\label{fig:left_15_3}}
    \subfigure[Case $s=4$]{\includegraphics[width=0.47\textwidth]{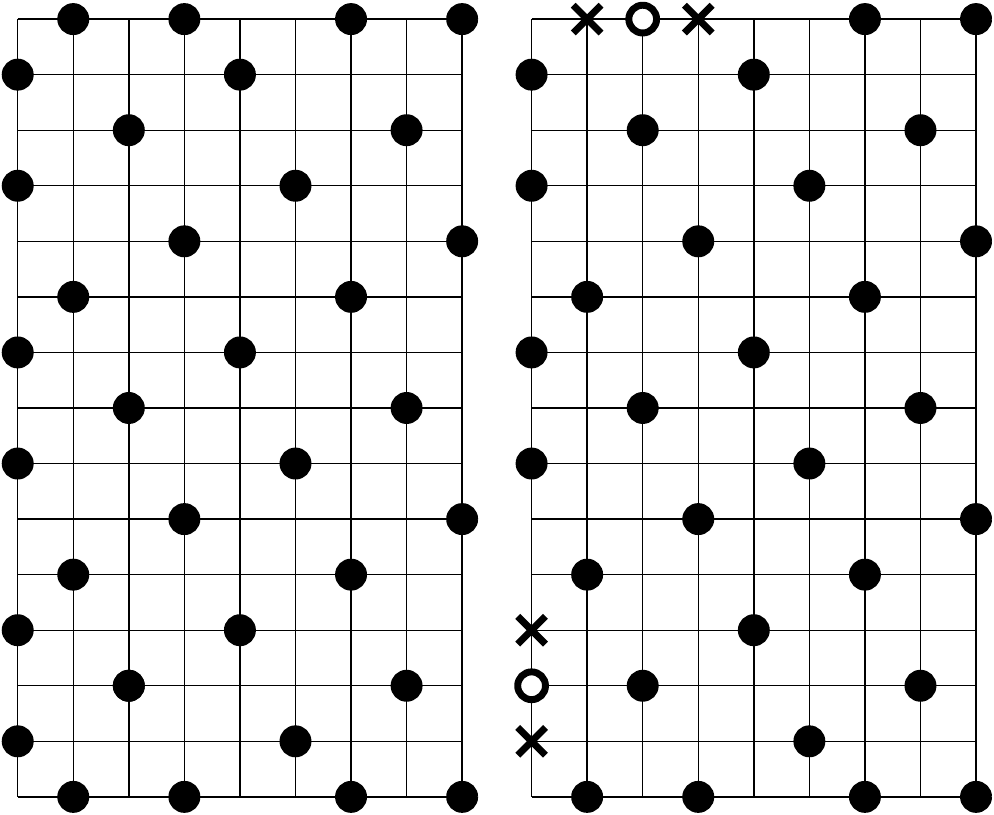}\label{fig:left_15_4}}
\caption{Changes on the left strip of grid $P_{15}\Box P_n$, cases s=1,3,4}\label{fig:left_15bis}
  \end{center}
\end{figure}

\section{Conclusions}

Independent $[1,2]$-sets are a generalization of efficient dominating sets, also called perfect codes, so they can be considered as quasi-efficient dominating sets. In this paper we solve a question posed in \cite{Che} about the existence of these sets in grids and we also obtain the value of the associated parameter $\imath\dot{}_{[1,2]}(P_m\square P_n)$. This question is relevant because the grid $P_m\Box  P_n$ has an efficient dominating set if and only if $m=n=4$ or $m=2, n=2k+1$ \cite{Mar}, so less demanding properties are needed to construct independent dominating sets in grids such that each vertex that is not in such a set has the smallest number of neighbors.

On the one hand, the algorithm presented in Section~\ref{sec:small} gives that the independent $[1,2]$-number agrees with the independent domination number in almost every grid of small size $1\leq m\leq 13, m\leq n$
$$\imath\dot{}_{[1,2]}(P_m\square P_n)=\left\{
\begin{array}{lll}
\ i(P_m\square P_n)+1& \dots &m=12, n\equiv10(mod\ 13) \\
&&\\
\ i(P_m\square P_n)& \dots &\text{otherwise}\\
\end{array}
\right.
$$

On the other hand, in Section~\ref{sec:big} we use a different approach for bigger grids. Although applying the algorithm for large values of $m$ and $n$ theoretically is possible, the running time needed is extremely long. Thus the calculation of $\imath\dot{}_{[1,2]}(P_m\square P_n)$ for $14\leq m\leq n$ is solved using a constructive method, by means of a quasi-regular pattern, that provides the following results

$$\imath\dot{}_{[1,2]}(P_m\square P_n)=\left\{
\begin{array}{lll}
\ i(P_m\square P_n)&\dots &m=14,15,\ m\leq n \\
&&\\
\gamma(P_m\square P_n)&\dots & 16\leq m \leq n
\end{array}
\right.
$$
\par\medskip

The following result is well-known (see \cite{Ban}) and shows the role that efficient dominating sets play into the environment of dominating sets.

\begin{thm}
If $C$ is an efficient dominating set in a graph $G$, then $\vert C\vert = \gamma(G)$.
\end{thm}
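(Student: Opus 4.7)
The plan is to show the two inequalities $|C|\geq \gamma(G)$ and $|C|\leq \gamma(G)$ separately. The first is immediate from the definition of $\gamma(G)$: since an efficient dominating set is, in particular, a dominating set, and $\gamma(G)$ is the minimum cardinality of a dominating set of $G$, one has $|C|\geq \gamma(G)$ for free.

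For the reverse inequality, I would use the structural consequence of efficiency: the closed neighborhoods $\{N[c]:c\in C\}$ form a partition of $V(G)$. Indeed, since $C$ is dominating, every vertex lies in some $N[c]$; and if some $v$ were in $N[c_1]\cap N[c_2]$ with $c_1\neq c_2$, then either $v\in C$ and $v$ would be adjacent to another vertex of $C$ (contradicting independence), or $v\notin C$ and $v$ would have at least two neighbors in $C$ (contradicting efficiency). So the $N[c]$'s are pairwise disjoint.

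With this partition in hand, let $D$ be any minimum dominating set, so $|D|=\gamma(G)$. For each $c\in C$, the closed neighborhood $N[c]$ must contain at least one vertex of $D$, because $c$ itself must be dominated by $D$, i.e.\ $N[c]\cap D\neq\emptyset$. Choosing for each $c\in C$ a representative $d_c\in N[c]\cap D$ defines a map $C\to D$; since the sets $N[c]$ are pairwise disjoint, this map is injective. Hence $\gamma(G)=|D|\geq |C|$, and combining with the first inequality gives $|C|=\gamma(G)$.

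There is no real obstacle here; the only subtlety worth being careful about is justifying that the closed neighborhoods partition $V(G)$ (using both independence and the ``exactly one neighbor'' condition), since this partition is exactly what converts the abstract domination requirement on $D$ into a pointwise injection $C\hookrightarrow D$.
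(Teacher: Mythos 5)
Your proof is correct. The paper itself states this result without proof, simply citing Bange, Barkauskas and Slater, and your argument is the standard one: the lower bound $|C|\geq\gamma(G)$ is immediate since $C$ is dominating, and the upper bound follows from the fact that independence plus the ``exactly one neighbor'' condition make the closed neighborhoods $N[c]$, $c\in C$, pairwise disjoint, so any dominating set $D$ must meet each of them and hence $|D|\geq|C|$. Both steps are justified carefully, including the case analysis showing the disjointness of the $N[c]$, so there is nothing to add.
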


This means that, if efficient dominating sets exist in a graph, they are the best dominating sets from two aspects: cardinality, as it is the minimum over all other types of dominating sets, and the number of neighbors each vertex has, which is zero if it is inside the set, and one and only one if it is outside.

Efficient dominating sets are not available in most grids but independent $[1,2]$-sets always exist in $P_m\square P_n$ and they play a similar role regarding to independent domination, except in case $m=12, n\equiv10 \pmod {13}$. In other words, independent $[1,2]$-sets are the best independent dominating sets in grids in both aspects: minimum cardinality and smallest number of neighbors.

\section*{References}


\begin{thebibliography}{99}

\bibitem{Ban} D.W. Bange, A.E. Barkauskas , P.J. Slater, Efficient dominating sets in graphs. Applications of discrete mathematics. Proceedings of the Third SIAM Conference on Discrete Mathematic (1986) 189--199.

\bibitem{Big} N. Biggs, Perfect codes in graphs, J. Combin. Theory Ser. B 15 (1973) 288--296.

\bibitem{Cha} T.Y. Chang, Domination numbers of grid graphs (Ph.D. Thesis), Dept. of Mathematics, University of South Florida, 1992.

\bibitem{chlepi11} G. Chartrand, L. Lesniak, P. Zhang, Graphs and Digraphs, (5th edition). CRC Press, Boca Raton, Florida, 2011.

\bibitem{Che} M. Chellali, O. Favaron, T. Hayes, S. Hedetniemi, A. McRae, Independent $[1,k]$-sets in graphs, Australas. J. Combin. 59 (1) (2014) 144--156.

\bibitem{CreOs} S. Crevals, P.R.J. \"{O}sterg{\aa}rd, Independent domination of grids, Discrete Math. 338 (2015) 1379--1384.

\bibitem{Gon} D. Gon\c{c}alves, A. Pinlou, M. Rao, S. Thomass\'{e}, The domination number of grids, SIAM J. Discrete Math. 25 (3) (2011) 1443--1453.

\bibitem{Gui} D.R. Guichard, A lower bound for the domination number of complete grid graphs, J. Combin. Math. Combin. Comput. 49 (2004) 215--220.

\bibitem{Hay}T.W. Haynes, S.T. Hedetniemi, P.J. Slater (Eds.), Foundamentals of Domination in Graphs, Marcel Dekker, Inc., New York, 1998.

\bibitem{Jac}M.S. Jacobson, L.F. Kinch, On the Domination Number of Products of Graphs: I , Ars Combin. 18 (1984) 33--44.

\bibitem{Mar} M. Livingston, Q.F. Stout, Perfect Dominating Sets. Proceedings of the Twenty-first Southeastern Conference on Combinatorics, Graph Theory, and Computing (Boca Raton, 1990). Congr. Numer. 79 (1990) 187--203.

\bibitem{Pin} J.-E. Pin, Tropical semirings, Idempotency (Bristol, 1994), 50–69, Publ. Newton Inst., Vol. 11, Cambridge Univ. Press, Cambridge, 1998.

\bibitem{Spa} A. Spalding, Min-Plus Algebra and Graph Domination, (Ph.D. thesis), Dept. of Applied Mathematics, University of Colorado, 1998.

\bibitem{Sim}I. Simon, Recognizable sets with multiplicities in the tropical semiring, Mathematical Foundations of Computer Science (Carlsbad, 1988), Lecture Notes in Computer Science, 324 (1988) 107--120.

\bibitem{Viz} V.G. Vizing, The Cartesian product of graphs, (Russian) Vyčisl. Sistemy 9 (1963) 30--43.

\end{thebibliography}
\end{document}